%
\documentclass[runningheads]{llncs}
\usepackage{latexsym,relsize}
\usepackage{stmaryrd}
\usepackage{graphicx}
\usepackage{amsmath,amsfonts,amssymb}
\usepackage{mathtools}
\usepackage{longtable}
\usepackage{algorithm}
\usepackage{algpseudocode}
%

%

\usepackage{comment}
\usepackage{color}
\usepackage{bussproofs}
\EnableBpAbbreviations

\usepackage{tikz}
\usepackage{colonequals}

\usepackage{multirow}

\usepackage{multicol}

\usepackage{array}
\newcolumntype{C}[1]{>{\centering\arraybackslash}p{#1}}
\newcolumntype{L}[1]{>{\arraybackslash}p{#1}}

\begin{document}
\def\mc{\multicolumn}
\newcommand{\cupdot}{\mathbin{\mathaccent\cdot\cup}}
\newcommand\xrowht[2][0]{\addstackgap[.5\dimexpr#2\relax]{\vphantom{#1}}}
\newcommand{\redbf}[1]{\textcolor{red}{\textbf{#1}}}
\newcommand{\red}[1]{\textcolor{red}{#1}}
\newcommand{\bluebf}[1]{\textcolor{blue}{\textbf{#1}}}
\newcommand{\blue}[1]{\textcolor{blue}{#1}}
\newcommand{\marginred}[1]{\marginpar{\raggedright\tiny{\red{#1}}}}
\newcommand{\marginredbf}[1]{\marginpar{\raggedright\tiny{\redbf{#1}}}}
\newcommand{\redfootnote}[1]{\footnote{\redbf{#1}}}
\def\fCenter{{\mbox{$\ \vdash\ $}}}
\newcommand{\fns}{\footnotesize}
\newcommand{\sabine}[1]{\textcolor{red}{\textbf{#1}}}
\newcommand{\willem}[1]{\textcolor{purple}{\textbf{#1}}}

\newcommand{\Abox}{\textsc{Abox} }
\newcommand{\Tbox}{\textsc{Tbox} }
\newcommand{\Aboxes}{\textsc{Abox}es }
\newcommand{\Tboxes}{\textsc{Tbox}es }
\renewcommand{\P}{\mathcal{P}}
\newcommand{\C}{\mathsf{C}}
\renewcommand{\c}{\mathsf{c}}
\renewcommand{\d}{\mathsf{d}}
\newcommand{\up}[1]{#1^{\uparrow}}
\newcommand{\down}[1]{#1^{\downarrow}}
\newcommand{\ud}[1]{#1^{\uparrow\downarrow}}
\newcommand{\du}[1]{#1^{\downarrow\uparrow}}
\newcommand{\D}{\mathsf{D}}
\newcommand{\I}{\mathsf{I}}
\newcommand{\q}{\mathsf{q}}
\renewcommand{\u}{\mathsf{u}}
\renewcommand{\L}{\mathbb{L}}
\newcommand{\F}{\mathbb{F}}
\newcommand{\osigma}{\overline{\sigma}}
\renewcommand{\emph}{\textbf}
\newcommand{\ovR}[1]{\overline{#1}^R}
\newcommand{\ovS}[1]{\overline{#1}^S}

\newcommand{\ovX}{\overline{X}}
\newcommand{\ovY}{\overline{Y}}
\newcommand{\ovg}{\overline{g}}
\newcommand{\ovf}{\overline{f}}

\newcommand{\op}{\mathbf{op}}
\newcommand{\mb}{\mathbb}
\newcommand{\mac}{\mathcal}

\newcommand{\Prop}{\mathsf{Prop}}
\newcommand{\Nom}{\mathsf{Nom}}
\newcommand{\Cnom}{\mathsf{Cnom}}

\newcommand{\low}{\mathsf{l}}
\newcommand{\cl}{\mathsf{c}}

\newcommand{\jty}{J^{\infty}}
\newcommand{\mty}{M^{\infty}}

\newcommand{\NOMI}{\mathbf{I}}
\newcommand{\NOMJ}{\mathbf{J}}
\newcommand{\NOMH}{\mathbf{H}}
\newcommand{\NOMK}{\mathbf{K}}
\newcommand{\NOMA}{\mathbf{A}}
\newcommand{\nomi}{\mathbf{i}}
\newcommand{\nomj}{\mathbf{j}}
\newcommand{\nomh}{\mathbf{h}}
\newcommand{\nomk}{\mathbf{k}}
\newcommand{\noma}{\mathbf{a}}

\newcommand{\NCL}{\mathbf{L}}

\newcommand{\CNOMM}{\mathbf{M}}
\newcommand{\CNOMN}{\mathbf{N}}
\newcommand{\CNOMO}{\mathbf{O}}
\newcommand{\CNOMX}{\mathbf{X}}
\newcommand{\cnomm}{\mathbf{m}}
\newcommand{\cnomn}{\mathbf{n}}
\newcommand{\cnomx}{\mathbf{x}}

\newcommand{\lcc}{\mathbf{c}}
\newcommand{\Leq}{\mathbf{s}}
\newcommand{\nLeq}{\mathbf{n}}

\newcommand{\Cc}{\mathbb{C}}

\newcommand{\p}{\mathcal{P}}

\newcommand{\marginnote}[1]{\marginpar{\raggedright\tiny{#1}}} 
\newcommand{\mpcolor}{\color{red}}
\newcommand{\mpprefix}{MP: }
\newcommand{\mpnote}[1]{{\mpcolor \mpprefix #1 }}
\newcommand{\mpmnote}[1]{\marginnote{\mpnote{#1}}}
\newcommand{\mpfnote}[1]{\footnote{\mpnote{#1}}}

\renewcommand{\L}{\mathcal{L}}
\newcommand{\Lb}{\mathcal{L}_\Box}
\newcommand{\Lmu}{\mathcal{L}_C}
\newcommand{\Ag}{\mathsf{Ag}}
\newcommand{\Cat}{\mathsf{Cat}}
\newcommand{\Ca}{\mathsf{C}_a}
\newcommand{\Cka}{\mathsf{C}_{k,a}}

\newcommand{\nomb}{\mathbf{b}}
\newcommand{\nomx}{\mathbf{x}}
\newcommand{\val}[1]{[\![{#1}]\!]}
\newcommand{\descr}[1]{(\![{#1}]\!)}
\renewcommand{\phi}{\varphi}


\newcommand{\wbox}{\ensuremath{\Box}\xspace}
\newcommand{\wdia}{\ensuremath{\Diamond}\xspace}
\newcommand{\bbox}{\ensuremath{\blacksquare}\xspace}
\newcommand{\bdia}{\ensuremath{\Diamondblack}\xspace}
\newcommand{\bcirc}{\ensuremath{{\bullet}}\xspace}
\newcommand{\WBOX}{\ensuremath{\check{\Box}}\xspace}
\newcommand{\WDIA}{\ensuremath{\hat{\Diamond}}\xspace}
\newcommand{\BBOX}{\ensuremath{\check{\blacksquare}}\xspace}
\newcommand{\BDIA}{\ensuremath{\hat{\Diamondblack}}\xspace}
\newcommand{\WCIR}{\ensuremath{\tilde{{\circ}}}\xspace}
\newcommand{\BCIR}{\ensuremath{\tilde{{\bullet}}}\xspace}

\newcommand{\aneg}{\ensuremath{\neg}\xspace}
\newcommand{\aatop}{\ensuremath{\top}\xspace}
\newcommand{\abot}{\ensuremath{\bot}\xspace}
\newcommand{\aand}{\ensuremath{\wedge}\xspace}
\newcommand{\aor}{\ensuremath{\vee}\xspace}
\newcommand{\ararr}{\ensuremath{\rightarrow}\xspace}
\newcommand{\Ararr}{\ensuremath{\Rightarrow}\xspace}
\newcommand{\alarr}{\ensuremath{\leftarrow}\xspace}
\newcommand{\adrarr}{\ensuremath{\,{>\mkern-7mu\raisebox{-0.065ex}{\rule[0.5865ex]{1.38ex}{0.1ex}}}\,}\xspace}
\newcommand{\adlarr}{\ensuremath{\rotatebox[origin=c]{180}{$\,>\mkern-8mu\raisebox{-0.065ex}{\aol}\,$}}\xspace}
%
\newcommand{\ANEG}{\ensuremath{\:\tilde{\neg}}\xspace}
\newcommand{\ATOP}{\hat{\top}}
\newcommand{\AATOP}{\hat{\top}}
\newcommand{\ABOT}{\ensuremath{\check{\bot}}\xspace}
\newcommand{\AAND}{\ensuremath{\:\hat{\wedge}\:}\xspace}
\newcommand{\AOR}{\ensuremath{\:\check{\vee}\:}\xspace}
\newcommand{\ABIGAND}[2]{\ensuremath{\:\hat{\bigwedge_{#1}^{#2}}\:}\xspace}
\newcommand{\ABIGOR}[2]{\ensuremath{\:\check{\bigvee_{#1}^{#2}}\:}\xspace}
\newcommand{\ARARR}{\ensuremath{\:\check{\rightarrow}\:}\xspace}
\newcommand{\ALARR}{\ensuremath{\:\check{\leftarrow}\:}\xspace}
\newcommand{\ADRARR}{\ensuremath{\hat{{\:{>\mkern-7mu\raisebox{-0.065ex}{\rule[0.5865ex]{1.38ex}{0.1ex}}}\:}}}\xspace}
\newcommand{\ADLARR}{\ensuremath{\:\hat{\rotatebox[origin=c]{180}{$\,>\mkern-8mu\raisebox{-0.065ex}{\aol}\,$}}\:}\xspace}

\def\fCenter{\vdash}

\newcommand{\vd}{\ \,\textcolor{black}{\vdash}\ \,}

\newcommand{\nx}{\textcolor{black}{x}}
\newcommand{\NR}{\textcolor{black}{R}}
\newcommand{\ny}{\textcolor{black}{y}}
\newcommand{\ol}[1]{\overline{#1}}

\newcommand\mAND{\,\bigcap\,}
\def\mBAND{\mbox{\,\raisebox{0ex}{\rotatebox[origin=c]{180}{$\bigsqcup$}}\,}}
\def\mRA{\mbox{\,\raisebox{0ex}{\rotatebox[origin=c]{-90}{$\bigcap$}}\,}}
\def\mBRA{\mbox{\,\raisebox{0ex}{\rotatebox[origin=c]{90}{$\bigsqcup$}}\,}}

\newcommand{\rulespace}{3.4mm}

\newcommand{\Diamondblack}{\blacklozenge}

\newcommand{\fakeparagraph}[1]{

\textit{#1} \ \ }

\newcommand{\cceq}{\coloncolonequals}
\newcommand{\ceq}{\colonequals}

\title{Non-distributive description logic\thanks{This paper is partially  funded by the EU MSCA (grant No.~101007627). The first author is funded by the National Research Foundation of South Africa (grant No.~140841). The third and fourth authors are partially funded by the NWO grant KIVI.2019.001.
}}
%
%
\author{Ineke van der Berg\inst{1,2}\orcidID{0000-0003-2220-1383} \and
Andrea De Domenico\inst{1}\orcidID{0000-0002-8973-7011} \and
Giuseppe Greco\inst{1}\orcidID{0000-0002-4845-3821} \and 
Krishna B. Manoorkar\inst{1}\orcidID{0000-0003-3664-7757}\and
Alessandra Palmigiano \inst{1,3}\orcidID{0000-0001-9656-7527} \and
Mattia Panettiere \inst{1}\orcidID{0000-0002-9218-5449} 
}
\authorrunning{van der Berg, De Domenico, Greco, Manoorkar, Palmigiano, Panettiere.}
%
\institute{Vrije Universiteit Amsterdam, The Netherlands \and
Department of Mathematical Sciences, Stellenbosch University, South Africa \and
Department of Mathematics and Applied Mathematics, University of Johannesburg, South Africa
}%
\maketitle              
\begin{abstract}
We define LE-$\mathcal{ALC}$, a  generalization of the description logic $\mathcal{ALC}$  based on the propositional logic of general (i.e.~not necessarily distributive) lattices, and semantically interpreted on relational structures  based on formal contexts from Formal Concept Analysis (FCA).  The description logic LE-$\mathcal{ALC}$ allows us to formally describe databases with objects, features, and formal concepts, represented according to FCA as Galois-stable sets of objects and features. We describe ABoxes and TBoxes in LE-$\mathcal{ALC}$,  provide a tableaux algorithm for checking the consistency of LE-$\mathcal{ALC}$ knowledge bases with acyclic TBoxes, and show its termination, soundness and completeness. Interestingly,  consistency checking for LE-$\mathcal{ALC}$ is in \textsc{PTIME} for acyclic and completely unravelled TBoxes, while the analogous problem in the classical $\mathcal{ALC}$ setting is \textsc{PSPACE}-complete.\\ 
{\em Keywords: Description logic, Tableaux algorithm, Formal Concept Analysis, LE-logics.}
\end{abstract}
\setlength{\abovedisplayskip}{0.2mm}
\setlength{\belowdisplayskip}{0.2mm}
\setlength{\abovedisplayshortskip}{0.2mm}
\setlength{\belowdisplayshortskip}{0.2mm}

\section{Introduction}\label{Sec:Introduction}
Description Logic (DL) \cite{DLhandbook} is a class of logical formalisms, typically based on classical first-order logic, and widely used in  Knowledge Representation and Reasoning to describe and reason about relevant concepts in a given application domain and their relationships. Since certain laws of classical logic fail in certain application domains, in recent years, there has been a growing interest in developing versions of description logics on  weaker (non-classical)  propositional bases. For instance, in \cite{depaiva2011constructive}, an intuitionistic version of the DL $\mathcal{ALC}$ has been introduced  for resolving some inconsistencies arising from the classical law  of excluded middle when applying $\mathcal{ALC}$ to legal domains. In \cite{fuzzyDL1,fuzzyDL2}, many-valued (fuzzy) description logics have been introduced to account for uncertainty and imprecision in processing  information in the Semantic Web, and recently, frameworks of non-monotonic description logics have been introduced \cite{giordano2015semantic,non-monotone,giordano2022conditional}.

One domain of application in which there is no consensus as to  how classical logic should be applied  is Formal Concept Analysis (FCA). In this setting, formal concepts arise from formal contexts
 $\mathbb{P}=(A,X,I)$, where $A$ and $X$ are sets (of objects and features respectively), and $I\subseteq A \times X$. 
 Specifically, formal concepts are represented as Galois-stable tuples $(B,Y)$ such that $B \subseteq A$ and $Y \subseteq X$ and $B = \{a \in A \mid \forall y (y \in Y \Rightarrow a I y)\}$ and $Y= \{x \in X \mid \forall b(b \in B \Rightarrow b I x)\}$. The formal concepts arising from a formal context are naturally endowed with a partial order (the sub-concept/super-concept relation) as follows: $(B_1, Y_1) \leq (B_2, Y_2)$ iff $B_1 \subseteq B_2$ iff $Y_2 \subseteq Y_1$.
 This partial order is a complete lattice, which is in general non-distributive. The failure of distributivity in the lattice of formal concepts introduces a tension between classical logic and the natural logic of formal concepts in FCA. 
This failure motivated the introduction  of
  lattice-based propositional (modal) logics  as  the (epistemic) logics of formal concepts \cite{conradie2016categories,conradie2017toward}. 
Complete relational  semantics of these logics  is given by {\em enriched formal contexts} (cf.~Section \ref{sssec:relsem}), relational structures $\mathbb{F}= (\mathbb{P}, \mathcal{R}_\Box, \mathcal{R}_\Diamond)$ based on formal contexts.\\ 
 In this paper, we  introduce LE-$\mathcal{ALC}$, a  lattice-based version of $\mathcal{ALC}$  which stands in the same relation 
 to the lattice-based modal logic of formal concepts \cite{conradie2020non} as classical $\mathcal{ALC}$ stands in relation to classical modal logic: 
 the language and semantics of LE-$\mathcal{ALC}$  is based on enriched formal contexts and their associated modal algebras. Thus, just like the language of $\mathcal{ALC}$ can be seen as a hybrid modal logic language interpreted on Kripke frames, the language of LE-$\mathcal{ALC}$ can be regarded as a hybrid modal logic language interpreted on enriched formal contexts. 
 
 FCA and DL are  different and well known approaches in the formal representation of  concepts (or categories). They have been  used together for several purposes \cite{DLandFCA1,DLandFCA2,DLandFCA3}.  Thus, providing a DL framework which allows us to describe formal contexts (possibly enriched, e.g.~with additional relations on them) would be useful in relating these frameworks both at a theoretical and at a practical level.   
 Proposals to connect FCA and DL have been made, in which  concept lattices serve as  models for DL concepts. Shilov and Han \cite{shilov2007proposal}  interpret the positive fragment of $\mathcal{ALC}$ concepts over concept lattices and show that this interpretation is compatible with standard Kripke models for  $\mathcal{ALC}$. A similar approach is used by Wrum \cite{wurm2017language} in which complete semantics for the (full) Lambek calculus is defined on concept lattices. The approach of the present paper for defining and interpreting non-distributive description logic and modal logic in relation with concept lattices with operators differs from the approaches mentioned above in that it is based on duality-theoretic insights (cf.~\cite{conradie2016categories}). This allows us not only  to  show that the DL framework introduced in the present paper is consistent with the standard DL setting and its interpretation on Kripke models, but also to show that several properties of these  logics and the meaning of their formulas  can also be ``lifted" from the classical (distributive)  to  non-distributive settings (cf.~\cite{conradie2022modal,conradie2020non,conradie2021rough} for extended discussions). 

The main technical contribution of this paper is a tableaux algorithm for checking the consistency of LE-$\mathcal{ALC}$ ABoxes. We show that the  algorithm is terminating, sound and complete.  Interestingly,   this algorithm has a polynomial time complexity, compared to the complexity of the consistency  checking of classical $\mathcal{ALC}$ ABoxes which is \textsc{PSPACE}-complete. This means that the consistency checking problem for completely unravelled TBoxes (cf.~Subsection \ref{ssec:Description logic ALC}) for our logic is in \textsc{PTIME}. The algorithm also  constructs a model for the given ABox which is polynomial in size. Thus, it also implies that the corresponding hybrid modal logic has the finite model property. \\
{\em Structure of the paper.}
In Section \ref{Sec:Preliminaries}, we give the necessary preliminaries on the DL $\mathcal{ALC}$,  lattice-based modal logics and their relational semantics. In Section \ref{Sec: LE-DL}, we introduce the syntax and the semantics of LE-$\mathcal{ALC}$. In Section \ref{Sec: tableau}, we introduce a tableaux algorithm for checking the  consistency  of LE-$\mathcal{ALC}$ ABoxes and show that it is terminating, sound and complete. In Section \ref{Sec: conclusions}, we conclude and discuss some   future research directions. 
\section{Preliminaries}\label{Sec:Preliminaries}
\subsection{Description logic $\mathcal{ALC}$}\label{ssec:Description logic ALC}

Let $\mathcal{C}$ and $\mathcal{R}$ be disjoint sets of primitive or atomic {\em concept names} and {\em role names}. The set of {\em concept descriptions}   over  $\mathcal{C}$  and $\mathcal{R}$ are defined recursively as follows.

{{\centering\small
$C \ceq A \mid \top \mid \bot \mid C \wedge C \mid C \vee C \mid \neg C \mid \exists r.C\mid \forall r.C$
\par}}

 \noindent where $A \in \mathcal{C}$ and $r \in \mathcal{R}$.  An {\em interpretation} is a tuple 
 $\mathrm{I}=(\Delta^{\mathrm{I}}, \cdot^{\mathrm{I}})$ s.t.~$\Delta^{\mathrm{I}}$ is a non-empty set and $\cdot^{\mathrm{I}}$  maps  every concept  $A \in \mathcal{C}$ to a set $A^{\mathrm{I}} \subseteq \Delta^{\mathrm{I}}$, and every role name $r \in \mathcal{R}$ to a relation $r^{\mathrm{I}} \subseteq \Delta^{\mathrm{I}} \times \Delta^{\mathrm{I}}$.
This mapping extends to all concept descriptions as follows:
\smallskip

{{\centering\small
\begin{tabular}{rclcrcl}
     $\top^{\mathrm{I}}$& = &$\Delta^{\mathrm{I}}$ & \quad &
     $\bot^{\mathrm{I}}$& = &$\varnothing$ \\ 
     $(C \wedge D)^{\mathrm{I}}$& = & $C ^{\mathrm{I}} \cap D ^{\mathrm{I}}$ & \quad &
    $(C \vee D)^{\mathrm{I}}$& = & $C ^{\mathrm{I}} \cup D ^{\mathrm{I}}$\\
    $(\exists r.C )^{\mathrm{I}}$& = & $\{ d \in \Delta^{\mathrm{I}} \mid \exists e((d,e) \in r^{\mathrm{I}}\ \&\ e \in C ^{\mathrm{I}}\}$ & \quad & $(\neg C )^{\mathrm{I}}$& = & $\Delta^{\mathrm{I}} \setminus C ^{\mathrm{I}}$
    \\
     $(\forall r.C )^{\mathrm{I}}$& = & $\{ d \in \Delta^{\mathrm{I}} \mid \forall e((d,e) \in r^{\mathrm{I}} \Rightarrow  e \in C ^{\mathrm{I}}\}$\\    
\end{tabular}
\par}}
\smallskip

 Let $\mathcal{S}$ be a set of individual names disjoint from $\mathcal{C}$ and $\mathcal{R}$, such that for every $a$ in $\mathcal{S}$, $a^\mathrm{I} \in \Delta^\mathrm{I}$. For any $a,b \in \mathcal{S}$, any   $C\in \mathcal{C}$ and  $r \in \mathcal{R}$, an expression of the form 
$a:C$ (resp.~$(a,b):r$) is  an $\mathcal{ALC}$ {\em concept assertion} (resp.~{\em role assertion}). A finite set of $\mathcal{ALC}$ concept and  role assertions is  an {\em $\mathcal{ALC}$ ABox}. An assertion $a:C$  (resp.~$(a,b):r$) is {\em satisfied} in an interpretation ${\mathrm{I}}$ 
  if $a^{\mathrm{I}} \in C^{\mathrm{I}}$ (resp.~if $(a^{\mathrm{I}},b^{\mathrm{I}}) \in r^{\mathrm{I}}$). 
 An $\mathcal{ALC}$ {\em TBox} is a finite set of expressions of the form $C_1 \equiv C_2$. An interpretation ${\mathrm{I}}$ {\em satisfies}  $C_1 \equiv C_2$ iff $C_1^{\mathrm{I}} = C_2^{\mathrm{I}}$. An $\mathcal{ALC}$ {\em knowledge base} is a tuple $(\mathcal{A}, \mathcal{T})$, where $\mathcal{A}$ is an $\mathcal{ALC}$ ABox, and  $\mathcal{T}$ is an $\mathcal{ALC}$ TBox. 
An interpretation ${\mathrm{I}}$ is a {\em model} for a knowledge base $(\mathcal{A}, \mathcal{T})$ iff it satisfies all members of $\mathcal{A}$ and $\mathcal{T}$.  A knowledge base $(\mathcal{A}, \mathcal{T})$ is {\em consistent} if there is a model for it. An ABox $\mathcal{A}$ (resp.~TBox $\mathcal{T}$) is  {\em consistent} if the knowledge base  $(\mathcal{A}, \varnothing)$ (resp.~$(\varnothing, \mathcal{T})$) is consistent.

An $\mathcal{ALC}$ {\em concept deﬁnition} in $T$ is an expression of the form $A \equiv C$ where $A$ is an atomic concept. We say that {\em $A$ directly
uses $B$} if there is a concept deﬁnition $A \equiv C$ in $\mathcal{T}$ such that $B$ occurs in $C$. We say that {\em $A$ uses $B$} if $A$ directly uses $B$, or if there is a concept
name $B'$ such that $A$ uses $B'$ and $B'$ directly uses $B$.
A finite set $\mathcal{T}$ of concept definitions is an {\em acyclic} TBox if

\noindent 1. there is no concept  in $\mathcal{T}$ that uses itself,

\noindent 2. no concept  occurs more than once on the left-hand side of a concept deﬁnition in $\mathcal{T}$.

\noindent A finite set $\mathcal{T}$ of concept definitions is a {\em completely unravelled} TBox if it is acyclic and no atomic concept occurring on the left-hand side of a concept definition in $\mathcal{T}$ is also present in some other concept definition on the right-side.

\noindent Checking the consistency of a  knowledge base is a key problem in description logics, usually solved via tableaux algorithms. In the $\mathcal{ALC}$ case,  checking the consistency of any  knowledge base is EXPTIME-complete while checking the consistency of a  knowledge base with acyclic TBoxes is PSPACE-complete \cite{DLhandbook}.

\subsection{Basic normal non-distributive modal logic and its semantics}\label{ssec:LE-logic}
 \label{sssec:basic non-distributive modal logic }
 The logic introduced in this section is part of a family of lattice-based  logics, sometimes referred to as {\em LE-logics} (cf.~\cite{conradie2019algorithmic}), which have been studied in the context of a research program on the logical foundations of categorization theory \cite{conradie2016categories,conradie2017toward,conradie2021rough,conradie2020non}.
Let $\Prop$ be a (countable) set of atomic propositions. The language $\mathcal{L}$ is defined as follows:
\[
  \varphi \ceq \bot \mid \top \mid p \mid  \varphi \wedge \varphi \mid \varphi \vee \varphi \mid \Box \varphi \mid \Diamond \varphi,  
\]
where $p\in \Prop$, and $\Box \in \mathcal{G}$ and  $\Diamond \in \mathcal{F}$ for finite sets $\mathcal{F}$ and $\mathcal{G}$ of unary  $\Diamond$-type (resp.~$\Box$-type)  modal operators.
The {\em basic}, or {\em minimal normal} $\mathcal{L}$-{\em logic} is a set $\mathbf{L}$ of sequents $\phi\vdash\psi$,  with $\phi,\psi\in\mathcal{L}$, containing the following axioms for every $\Box \in \mathcal{F}$ and $\Diamond \in \mathcal{G}$:
\smallskip

{{\centering
\begin{tabular}{ccccccccccccc}
     $p \vdash p$ & \qquad & $\bot \vdash p$ & \qquad & $p \vdash p \vee q$ & \qquad & $p \wedge q \vdash p$ & \qquad & $\top \vdash \Box\top$ & \qquad & $\Box p \wedge \Box q \vdash \Box(p \wedge q)$
     \\
     & \qquad & $p \vdash \top$ & \qquad & $q \vdash p \vee q$ & \qquad & $p \wedge q \vdash q$ &\qquad &  $\Diamond\bot \vdash \bot$ & \qquad & $\Diamond(p \vee q) \vdash \Diamond p \vee \Diamond q$\\
\end{tabular}
\par}}
\smallskip
\noindent and closed under the following inference rules:
		{\small{
		\begin{gather*}
			\frac{\phi\vdash \chi\quad \chi\vdash \psi}{\phi\vdash \psi}
			\ \ 
			\frac{\phi\vdash \psi}{\phi\left(\chi/p\right)\vdash\psi\left(\chi/p\right)}
			\ \ 
			\frac{\chi\vdash\phi\quad \chi\vdash\psi}{\chi\vdash \phi\wedge\psi}
			\ \ 
			\frac{\phi\vdash\chi\quad \psi\vdash\chi}{\phi\vee\psi\vdash\chi}
\ \ 
			\frac{\phi\vdash\psi}{\Box \phi\vdash \Box \psi}
\ \ 
\frac{\phi\vdash\psi}{\Diamond \phi\vdash \Diamond \psi}
\end{gather*}}}

Note that unlike in classical modal logic, we cannot assume that  $\Box$ and $\Diamond$ are inter-definable in LE-logics, hence we take all connectives as primitive.

\fakeparagraph{Relational semantics.}
\label{sssec:relsem}
The following notation,  notions and facts are from \cite{conradie2021rough,conradie2020non}.
For any binary relation $T\subseteq U\times V$, and any $U'\subseteq U$  and $V'\subseteq V$, we let $T^c$ denote the set-theoretic complement of $T$ in $U\times V$, and
{\small\begin{equation}\label{eq:def;round brackets}T^{(1)}[U']\ceq\{v\mid \forall u(u\in U'\Rightarrow uTv) \}  \quad\quad T^{(0)}[V']\ceq\{u\mid \forall v(v\in V'\Rightarrow uTv) \}.\end{equation}}
 In what follows, we fix two sets $A$ and $X$, and use $a, b$ (resp.~$x, y$) for elements of $A$ (resp.~$X$), and $B, C, A_j$ (resp.~$Y, W, X_j$) for subsets of $A$ (resp.~of $X$).

A {\em polarity} or {\em formal context} (cf.~\cite{ganter2012formal}) is a tuple $\mathbb{P} =(A,X,I)$, where $A$ and $X$ are sets, and $I \subseteq A \times X$ is a binary relation. 
Intuitively, formal contexts can be understood as abstract representations of databases \cite{ganter2012formal}, so that  $A$ and  $X$ represent collections of {\em objects} and {\em features}, and for any object $a$ and feature $x$, the tuple $(a, x)$ belongs to $I$ exactly when object $a$ has feature $x$.

 As is well known, for every formal context $\mathbb{P} = (A, X, I)$, the pair of maps 

 {{\centering
 $(\cdot)^\uparrow: \mathcal{P}(A)\to \mathcal{P}(X)\quad \mbox{ and } \quad(\cdot)^\downarrow: \mathcal{P}(X)\to \mathcal{P}(A),$
\par}}
\smallskip 
\noindent  defined by the assignments $B^\uparrow \ceq I^{(1)}[B]$ and $Y^\downarrow \ceq I^{(0)}[Y]$,  form a Galois connection, and hence induce the closure operators $(\cdot)^{\uparrow\downarrow}$ and $(\cdot)^{\downarrow\uparrow}$ on $\mathcal{P}(A)$ and on $\mathcal{P}(X)$ respectively. The fixed points of $(\cdot)^{\uparrow\downarrow}$ and $(\cdot)^{\downarrow\uparrow}$ are  the {\em Galois-stable} sets.
A {\em formal concept} of a polarity $\mathbb{P}=(A,X,I)$ is a tuple $c=(B,Y)$ such that $B\subseteq A$ and $Y\subseteq X$, and $B = Y^\downarrow$ and $Y = B^\uparrow$.  The subset $B$ (resp.~$Y$) is   the {\em extension} (resp.~the {\em intension}) of $c$ and is denoted by $\val{c}$  (resp.~$\descr{c}$). 
It is well known (cf.~\cite{ganter2012formal}) that the sets $B$ and $Y$ are  Galois-stable, and
 that the set  of formal concepts of a polarity $\mathbb{P}$,  with the order defined by

{{\centering
$c_1 \leq c_2 \quad \text{iff} \quad \val{c_1} \subseteq \val{c_2} \quad \text{iff} \quad \descr{c_2} \subseteq \descr{c_1}$,
\par
}}
\smallskip
\noindent forms a complete lattice $\mathbb{P}^+$, namely  the {\em concept lattice} of $\mathbb{P}$. 

For the language $\mathcal{L}$ defined above, an {\em enriched formal $\mathcal{L}$-context} is a tuple $\mathbb{F} =(\mathbb{P}, \mathcal{R}_\Box, \mathcal{R}_\Diamond)$, where $\mathcal{R}_\Box= \{R_\Box  \subseteq A \times X \mid \Box \in \mathcal{G}\} $ and $ \mathcal{R}_\Diamond = \{R_\Diamond \subseteq X \times A \mid \Diamond \in \mathcal{F}\}$  are sets of {\em $I$-compatible} relations, that is, for all $\Box \in \mathcal{G}$, $\Diamond \in \mathcal{F}$, $a \in A$, and $x \in X$, the sets $R_\Box^{(0)}[x]$, $R_\Box^{(1)}[a]$, $R_\Diamond^{(0)}[a]$, $R_\Diamond^{(1)}[x]$  are Galois-stable in $\mathbb{P}$. For each $\Box\in \mathcal{G}$ and $\Diamond\in \mathcal{F}$, their associated relations $R_\Box$ and $R_\Diamond$ provide their corresponding semantic interpretations as operations $[R_\Box]$ and $\langle R_\Diamond\rangle$ on the concept lattice $\mathbb{P}^+$ defined as follows:
For any $c \in \mathbb{P}^+$,
{{\centering
      $[R_\Box] c =(R_\Box^{(0)}[\descr{c}], I^{(1)}[R_\Box^{(0)}[\descr{c}]]) \quad \text{and} \quad \langle R_\Diamond\rangle  c =( I^{(0)}[R_\Diamond^{(0)}[\val{c}]], R_\Diamond^{(0)}[\val{c}]).$
\par
}}
\smallskip
We refer to the algebra $\mathbb{F}^+=(\mathbb{P}^+, \{[R_\Box]\}_{\Box\in\mathcal{G}}, \{\langle R_\Diamond \rangle\}_{\Diamond\in\mathcal{F}})$ as the {\em complex algebra} of  $\mathbb{F}$. 

A {\em valuation} on such an $\mathbb{F}$
is a map $V\colon\Prop\to \mathbb{P}^+$. For each  $p\in \Prop$, we let  $\val{p} \ceq \val{V(p)}$ (resp.~$\descr{p}\ceq \descr{V(p)}$) denote the extension (resp.\ intension) of the interpretation of $p$ under $V$.  

A {\em model} is a tuple $\mathbb{M} = (\mathbb{F}, V)$ where $\mathbb{F} = (\mathbb{P}, \mathcal{R}_{\Box}, \mathcal{R}_{\Diamond})$ is an enriched formal context and $V$ is a  valuation on $\mathbb{F}$. For every $\phi\in \mathcal{L}$, we let  $\val{\phi}_\mathbb{M} \ceq \val{V(\phi)}$ (resp.~$\descr{\phi}_\mathbb{M}\ceq \descr{V(\phi)}$) denote the extension (resp.\ intension) of the interpretation of $\phi$ under the homomorphic extension of $V$.   The following `forcing' relations can be recursively defined as follows: 
\smallskip

\smallskip

{{\centering 
\scriptsize
\begin{tabular}{l@{\hspace{1em}}l@{\hspace{2em}}l@{\hspace{1em}}l}
$\mathbb{M}, a \Vdash p$ & iff $a\in \val{p}_{\mathbb{M}}$ &
$\mathbb{M}, x \succ p$ & iff $x\in \descr{p}_{\mathbb{M}}$ \\
$\mathbb{M}, a \Vdash\top$ & always &
$\mathbb{M}, x \succ \top$ & iff   $a I x$ for all $a\in A$\\
$\mathbb{M}, x \succ  \bot$ & always &
$\mathbb{M}, a \Vdash \bot $ & iff $a I x$ for all $x\in X$\\
$\mathbb{M}, a \Vdash \phi\wedge \psi$ & iff $\mathbb{M}, a \Vdash \phi$ and $\mathbb{M}, a \Vdash  \psi$ & 
$\mathbb{M}, x \succ \phi\wedge \psi$ & iff $(\forall a\in A)$ $(\mathbb{M}, a \Vdash \phi\wedge \psi \Rightarrow a I x)$
\\
$\mathbb{M}, x \succ \phi\vee \psi$ & iff  $\mathbb{M}, x \succ \phi$ and $\mathbb{M}, x \succ  \psi$ & 
$\mathbb{M}, a \Vdash \phi\vee \psi$ & iff $(\forall x\in X)$ $(\mathbb{M}, x \succ \phi\vee \psi \Rightarrow a I x)$.
\end{tabular}
\par}}
\smallskip

\noindent As to the interpretation of modal formulas, for every $\Box\in\mathcal{G}$ and $\Diamond\in\mathcal{F}$:
\smallskip

{{\centering
\scriptsize
\begin{tabular}{llcll}
$\mathbb{M}, a \Vdash \Box\phi$ &  iff $(\forall x\in X)(\mathbb{M}, x \succ \phi \Rightarrow a R_\Box x)$ & \quad\quad &
$\mathbb{M}, x \succ \Box\phi$ &  iff $(\forall a\in A)(\mathbb{M}, a \Vdash \Box\phi \Rightarrow a I x)$\\
$\mathbb{M}, x \succ \Diamond\phi$ &  iff for all $ a\in A$, if $\mathbb{M}, a \Vdash \phi$ then $x R_\Diamond a$ &&
$\mathbb{M}, a \Vdash \Diamond\phi$ & iff $(\forall x\in X)(\mathbb{M}, x \succ \Diamond\phi \Rightarrow a I x)$  \\
\end{tabular}
\par}}
\smallskip

\noindent The definition above ensures that, for any $\mathcal{L}$-formula $\varphi$,

{{\small\centering
$\mathbb{M}, a \Vdash \phi$  iff  $a\in \val{\phi}_{\mathbb{M}}$, \quad  and \quad$\mathbb{M},x \succ \phi$  iff  $x\in \descr{\phi}_{\mathbb{M}}$. \par}}

{{\small\centering
$\mathbb{M}\models \phi\vdash \psi$ \quad iff \quad $\val{\phi}_{\mathbb{M}}\subseteq \val{\psi}_{\mathbb{M}}$\quad  iff  \quad  $\descr{\psi}_{\mathbb{M}}\subseteq \descr{\phi}_{\mathbb{M}}$. 
\par}}

The interpretation of the propositional connectives $\vee$ and $\wedge$ in  the framework described above reproduces the standard notion of join and the meet of formal concepts used in FCA. The interpretation of the operators $\Box$ and $\Diamond$  is motivated by algebraic properties and  duality theory for modal operators on lattices (cf.~\cite[Section 3]{conradie2020non} for an expanded discussion). In \cite[Proposition 3.7]{conradie2021rough}, it is shown that the semantics of LE-logics is compatible with Kripke semantics for classical modal logic, and thus, LE-logics are indeed generalizations of classical modal logic. This interpretation is further justified in \cite[Section 4]{conradie2021rough} by noticing  that, under 
the interpretations of the relation $I$ as 
$a I x$ iff ``object $a $  has feature $x$''
and  $R=R_\Box =R^{-1}_\Diamond$ as $a R x$ iff ``there is evidence that object $a$  has feature $x$'', then, for any concept $c$, the extents of concepts $\Box c$ and $\Diamond c$ can be interpreted as ``the set of objects which {\em certainly} belong to $c$'' (upper approximation), and ``the set of objects which {\em possibly} belong to $c$'' (lower approximation) respectively. Thus, the interpretations of $\Box$ and $\Diamond$ have similar meaning in the LE-logic as in the classical modal logic. A similar justification regarding similarity of epistemic interpretations of $\Box$ in classical and lattice-based modal logics  is discussed in \cite{conradie2017toward}.  This transfer of meaning of modal axioms from classical modal logic to LE-logics has been investigated  as a general phenomenon in \cite[Section 4.3]{conradie2022modal}, \cite{conradie2020non}.

\section{LE Description logic}
\label{Sec: LE-DL}
In this section, we  introduce the  non-classical  DL LE-$\mathcal{ALC}$, so that LE-$\mathcal{ALC}$ will be in same relation with LE-logic as $\mathcal{ALC}$ is  with classical modal logic. This similarity extends to the models we will introduce for LE-$\mathcal{ALC}$: in the same way as Kripke models of classical modal logic are used as models of  $\mathcal{ALC}$,  enriched formal contexts, which provide complete semantics for   LE-logic, will serve as models of LE-$\mathcal{ALC}$. 
In this specific respect,  LE-$\mathcal{ALC}$ can be seen as a  generalization  of the positive fragment (i.e.~the fragment with no negations in concepts) of $\mathcal{ALC}$ in which we do not assume distributivity laws to hold for concepts. Consequently, the language of LE-$\mathcal{ALC}$  contains individuals of two  types, usually interpreted as the {\em objects} and {\em features} of the given database or categorization. Let $\mathsf{OBJ}$ and $\mathsf{FEAT}$ be  disjoint sets of individual names for objects and  features. 

The set $\mathcal{R}$ of the role names for LE-$\mathcal{ALC}$ is the  union of three disjoint sets of  relations: (1) the single  relation $I \subseteq \mathsf{OBJ} \times \mathsf{FEAT} $   (2) a set  $\mathcal{R}_\Box$ such that for any $R_\Box \in \mathcal{R}_\Box$m, $R_\Box \subseteq  \mathsf{OBJ} \times \mathsf{FEAT}$; (3) a set  $\mathcal{R}_\Diamond$ such that for any $R_\Diamond \in \mathcal{R}_\Box$m, $R_\Diamond \subseteq  \mathsf{FEAT} \times \mathsf{OBJ}$. While $I$ is intended to be interpreted as the incidence relation of formal concepts, and  encodes information on which objects have which features, the relations in $\mathcal{R}_\Box$  and $\mathcal{R}_\Diamond$
encode additional relationships between  objects and  features (cf.~\cite{conradie2021rough} for an extended discussion).

For any set  $\mathcal{C}$
 of atomic concept names, the  language of LE-$\mathcal{ALC}$ concepts  is:

{{\centering
 $C \ceq D\ |\ C_1 \wedge C_2\ |\ C_1\vee C_2\  |\ \langle R_\Diamond \rangle C\ |\ [R_\Box ]C$   
\par}}

\noindent where $D \in \mathcal{C}$, $R_\Box \in \mathcal{R}_\Box$ and $R_\Diamond \in \mathcal{R}_\Diamond$. 
This language  matches the language of LE-logic, and has an analogous intended  interpretation on the complex algebras of enriched formal contexts  (cf.~Section \ref{sssec:basic non-distributive modal logic }).
 As usual, $\vee$ and $\wedge$ are to be interpreted as the smallest common superconcept  and the greatest common subconcept as in FCA. The constants $\top$ and $\bot$ are to be interpreted as the largest and the smallest concept, respectively. We do not include $\neg C$ as a valid concept  in our language, since there is no canonical and natural way to interpret negations in non-distributive settings.  
 
 The concepts $\langle R_\Diamond \rangle C$ and $[R_\Box]C $  in  LE-$\mathcal{ALC}$ are intended to be interpreted as the operations $\langle R_\Diamond\rangle$ and  $[R_\Box]$ defined by the interpretations of their corresponding role names  in enriched formal contexts, analogously to the way in which $\exists r$ and $\forall r$ in $\mathcal{ALC}$ are interpreted on Kripke frames. 
 We do not use the symbols $\forall r$ and $\exists r$ in the context of LE-$\mathcal{ALC}$ because, as discussed in Section \ref{sssec:basic non-distributive modal logic }, the semantic clauses of modal operators in LE-logic use universal quantifiers, and hence using the same notation verbatim would  be ambiguous or misleading.

TBox assertions in LE-$\mathcal{ALC}$   are of the  shape $C_1 \equiv C_2$, where $C_1$ and $C_2$ are concepts defined as above.\footnote{As is standard in DL (cf.~\cite{DLhandbook} for more details), general concept inclusion of the form $C_1 \sqsubseteq C_2$ can be rewritten as concept definition $C_1 \equiv C_2 \wedge C_3$, where $C_3$ is a new atomic concept name.} The ABox assertions are of the form:
\noindent

{{\centering
  $aR_\Box x,\quad xR_\Diamond a,\quad aIx,\quad a:C,\quad x::C,\quad \neg \alpha,$  
\par}}

\noindent
where $\alpha$ is any of the first five ABox terms, and $C$ is any concept in the language of LE-$\mathcal{ALC}$. We refer to the terms of first three types as {\em relational terms}. The interpretations of the terms $a:C$ and  $x::C$ are: ``object $a$ is a member of concept $C$", and  ``feature  $x$ is in the description of  concept $C$", respectively.

An {\em interpretation} for LE-$\mathcal{ALC}$ is a tuple $\mathrm{I} = (\mathbb{F}, \cdot^\mathrm{I}) $, where $\mathbb{F}=(\mathbb{P}, \mathcal{R}_\Box, \mathcal{R}_\Diamond)$ is
an enriched formal context, and $\cdot^\mathrm{I}$ maps:

\noindent 1. individual names $a \in \mathsf{OBJ}$ (resp.~$x \in \mathsf{FEAT}$),   to some $a^\mathrm{I} \in A$ (resp.~$x^\mathrm{I} \in X$);

\noindent 2. relation names $I$, $R_\Box$ and $R_\Diamond$ to  relations $I^\mathrm{I}$, $R_\Box^\mathrm{I}$ and $R_\Diamond^\mathrm{I}$ in $\mathbb{F}$;

\noindent 3. any atomic concept name $D$ to  $D^\mathrm{I} \in \mathbb{F}^+$, and other concepts as follows:
\smallskip

{{\centering
    \begin{tabular}{l l }

    $(C_1 \wedge C_2)^{\mathrm{I}} = C_1^{\mathrm{I}} \wedge C_2^{\mathrm{I}}$\qquad   & $(C_1\vee C_2)^{\mathrm{I}} = C_1^{\mathrm{I}} \vee C_2^{\mathrm{I}}$\\  
    $([R_\Box]C)^\mathrm{I} = [R_\Box^\mathrm{I}]C^{\mathrm{I}}$ \qquad  &  $(\langle R_\Diamond \rangle C)^\mathrm{I} =\langle  R_\Diamond^{\mathrm I} \rangle C^{\mathrm{I}} $\\
    \end{tabular}
\par}}
\smallskip
where the operators $[R^\mathrm{I}_\Box ]$ and $\langle R^\mathrm{I}_\Diamond \rangle$ are defined as in Section \ref{sssec:basic non-distributive modal logic }.

The satisfiability relation for an interpretation $\mathrm{I}$ is defined as follows:

\noindent 1. $\mathrm{I} \models C_1\equiv C_2$ iff $\val{C_1^\mathrm{I}} = \val{C_2^\mathrm{I}}$ iff $\descr{C_2^\mathrm{I}} = \descr{C_1^\mathrm{I}}$.

\noindent 2. $\mathrm{I} \models a:C$ iff $a^\mathrm{I} \in \val{C^\mathrm{I}}$
and  $\mathrm{I} \models x::C$ iff $x^\mathrm{I} \in \descr{C^\mathrm{I}}$.

\noindent 3. $ \mathrm{I} \models  a I x$ (resp.~$a R_\Box x$, $x R_\Diamond a$) iff $a^{\mathrm{I}} I^{\mathrm{I}} x^{\mathrm{I}} $ (resp.~$a^{\mathrm{I}} R_\Box^{\mathrm{I}} x^{\mathrm{I}} $, $x^{\mathrm{I}} R_\Diamond^{\mathrm{I}} a^{\mathrm{I}} $). 

\noindent 4. $ \mathrm{I} \models  \neg \alpha$,  where $\alpha$ is any ABox term, iff $ \mathrm{I} \not\models  \alpha$. 

\noindent An interpretation $\mathrm{I}$ is a {\em model } for an  LE-$\mathcal{ALC}$ knowledge base $(\mathcal{A}, \mathcal{T})$
if  $\mathrm{I}\models \mathcal{A}$ and $\mathrm{I} \models \mathcal{T}$. An LE-$\mathcal{ALC}$ knowledge base $(\mathcal{A}, \mathcal{T})$ (resp.~TBox $\mathcal{T}$, resp.~ABox $\mathcal{A}$) is said to be {\em inconsistent} if it has no model.

The framework of LE-$\mathcal{ALC}$ formally brings  FCA and DL together in two important ways: (1) the concepts of LE-$\mathcal{ALC}$   are naturally interpreted as formal concepts in FCA;  (2) the language of LE-$\mathcal{ALC}$  is designed  to represent knowledge and reasoning in the setting of  enriched formal contexts.

\section{Tableaux algorithm for ABox of LE-$\mathcal{ALC}$}
\label{Sec: tableau}
In this section, we define a tableaux algorithm for checking the consistency of LE-$\mathcal{ALC}$ ABoxes. An LE-$\mathcal{ALC}$ ABox $\mathcal{A}$ contains a {\em clash} iff it contains both $\beta$ and $\neg \beta$ for some relational term $\beta$. The expansion rules below are designed so that 
the expansion of $\mathcal{A}$ will contain a clash iff  $\mathcal{A}$ is inconsistent. 
The set   $sub(C)$ of sub-formulas of 
  any  LE-$\mathcal{ALC}$   concept  $C$  is defined  as 
  usual.

A concept  $C'$ {\em occurs}  in $\mathcal{A}$ (in symbols: $C' \in \mathcal{A}$) if $C'\in sub(C)$ for some $C$ such that  one of the terms $a:C$,  $x::C$, $\neg a:C$, or $\neg x ::C$ is in $\mathcal{A}$. A constant $b$  (resp.~$y$) {\em occurs} in $\mathcal{A}$ ($b \in \mathcal{A}$, or $y \in \mathcal{A}$), iff some term containing $b$ (resp.~$y$) occurs in it. 

The tableaux algorithm below constructs a model 
$(\mathbb{F},\cdot^\mathrm{I})$ for every consistent $\mathcal{A}$, where 
$\mathbb{F}= (\mathbb{P}, \mathcal{R}_\Box, \mathcal{R}_\Diamond)$ is such that,
for  any $C \in \mathcal{A}$, some   $a_C \in A$ and $x_C \in X$ exist such that, for any  $a \in A$ (resp.~any $x \in X$), $a \in \val{C^{\mathrm{I}}}$ (resp.~$x \in \descr {C}^{\mathrm{I}}$) iff $a I x_C$ (resp.~$a_C I x$).
We call $a_C$ and $x_C$ the  {\em classifying object} and the {\em classifying feature} of $C$, respectively. To make our notation more easily readable, we will write $a_{\Box C}$, $x_{\Box C}$ (resp.~$a_{\Diamond C}$, $x_{\Diamond C}$) instead of $a_{[R_\Box]C}$, $x_{[R_\Box]C}$ (resp.~$a_{\langle R_\Diamond\rangle C}$, $x_{\langle R_\Diamond\rangle C}$) Moreover, for every $R_\Box \in \mathcal{R}_\Box$ and $R_\Diamond \in \mathcal{R}_\Diamond$, we will also impose the condition that  $a \in \val{[R_\Box]C}$ (resp.~$x \in \descr{\langle R_\Diamond \rangle C}$)  iff $a R_\Box x_C$ (resp.~$x R_\Diamond a_C$), where $a_C$ and $x_C$ are the classifying object and the classifying feature of $C$, respectively. Note that we can always assume w.l.o.g.~that any consistent  ABox $\mathcal{A}$ is satisfiable in  a model with classifying objects and features (cf.~Theorem \ref{thm:completeness}).

%
%
\vspace{-0.2 cm}
\begin{algorithm} 
\caption{tableaux algorithm for checking LE-$\mathcal{ALC}$ ABox consistency }\label{alg:main algo}
\label{alg:tableaux}
    \hspace*{\algorithmicindent} \textbf{Input}: An   LE-$\mathcal{ALC}$ ABox $\mathcal{A}$. \quad \textbf{Output}: whether $\mathcal{A}$ is inconsistent. 
    \begin{algorithmic}[1]
        \State \textbf{if} there is a clash in $\mathcal{A}$ \textbf{then} \textbf{return} ``inconsistent''.
        \State \textbf{pick} any applicable expansion rule $R$, \textbf{apply} $R$ to $\mathcal{A}$ and proceed recursively.
      \State \textbf{if}  no expansion  rule is applicable   \textbf{return} ``consistent''.
    \end{algorithmic}
\end{algorithm}
\vspace{-0.4 cm}

\noindent Below, we list  the expansion rules. The commas in  each rule are  metalinguistic conjunctions, hence every tableau is non-branching. 
\smallskip

{{\footnotesize
\centering
\begin{tabular}{cccc} 
\mc{1}{c}{\textbf{Creation rule}} & \mc{1}{c}{\textbf{Basic rule}} \\
\AXC{For any $C \in \mathcal{A}$}
\RL{\fns create}
\UIC{$a_C:C$, \quad $x_C::C$}
\DP
 \ & \ 
\rule[-1.85mm]{0mm}{8mm}
\AXC{$b:C, \quad y::C$}
\LL{\fns $I$}
\UIC{$b I y$}
\DP 
\ & \
\\


\end{tabular}

\begin{tabular}{cccc}
\mc{2}{c}{\textbf{Rules for the logical connectives}} & \mc{2}{c}{\textbf{$I$-compatibility rules}} \\
\rule[-1.85mm]{0mm}{8mm}
 \AXC{$b:C_1 \wedge  C_2$}
\RL{\fns $\wedge_A$}
\UIC{$b:C_1,$ \quad $b:C_2$}
\DP 
\ & \

\AXC{$y::C_1 \vee  C_2$}
\LL{\fns $\vee_X$}
\UIC{$y::C_1,$ \quad $y::C_2$}
\DP 
\  &  \

\AXC{$b I \Box y$}
\LL{\fns $\Box y$}
\UIC {$b R_\Box y$}
\DP

\ & \

\AXC{$b I \blacksquare y$}
\RL{\fns $\blacksquare y$}
\UIC {$y R_\Diamond b$}
\DP
\\[3mm]

 \AXC{$b:[R_\Box]C,$ \quad  $y::C$}
\LL{\fns $\Box$}
\UIC{$b R_\Box y$}
\DP 
\ & \ 
  \AXC{$y::\langle  R_\Diamond \rangle C,$ \quad  $b:C$}
\RL{\fns $\Diamond$}
\UIC{$y R_\Diamond b$}
\DP 
\ & \
\AXC{$\Diamond b I  y$}
\LL{\fns $\Diamond b$}
\UIC {$y R_\Diamond b$}
\DP
\ & \
\AXC{$\Diamondblack b I y$}
\RL{\fns $\Diamondblack b$}
\UIC {$b R_\Box y$}
\DP
\\[3 mm]

\end{tabular}
\begin{tabular}{rl}
\mc{2}{c}{\textbf{inverse rules for the lattice connectives}}  \\

   \AXC{$b:C_1$, $b:C_2$, $C_1 \wedge C_2 \in \overline{\mathcal{A}}$}
\LL{\fns $\wedge_A^{-1}$}
\UIC {$b:C_1 \wedge C_2$}
\DP
\ & \
\AXC{$y::C_1$, $y::C_2$, $C_1 \vee C_2 \in \overline{\mathcal{A}}$}
\RL{\fns $\vee_X^{-1}$}
\UIC {$y::C_1 \vee C_2$}
\DP
\end{tabular}

\begin{tabular}{rl}
\mc{2}{c}{\textbf{Adjunction rules}}  \\
\rule[-1.85mm]{0mm}{8mm}
\AXC{$ b R_\Box y$}
\LL{\fns $R_\Box$}
\UIC{$\Diamondblack b I y,$ \quad  $b I \Box y$}
\DP 
\ & \ 
\AXC{$y R_\Diamond b$}
\RL{\fns $R_\Diamond$}
\UIC{$\Diamond b I y,$ \quad  $b I \blacksquare y$}
\DP 
\\ [2mm]
\end{tabular}

\begin{tabular}{cccc}
\mc{2}{c}{\textbf{Basic rules for negative assertions}} & \mc{2}{c}{\textbf{Appending  rules}} \\
\rule[-1.85mm]{0mm}{8mm}
\AXC{$\neg (b:C)$}
\LL{\fns $\neg b$}
\UIC{$\neg (b I x_C)$}
\DP 
\ & \ 
\AXC{$\neg (x::C)$}
\RL{\fns $\neg x$}
\UIC{$\neg (a_C I x)$}
\DP 
\ & \ 
\AXC{$b I x_C$}
\LL{\fns $x_C$}
\UIC{$b:C$}
\DP 
\ & \ 
\AXC{$a_C I y$}
\RL{\fns $a_C$}
\UIC{$y::C$}
\DP 
\\
\end{tabular}
\par}}
\smallskip 

\noindent In the adjunction rules the individuals $\Diamondblack b$, $\Diamond b$, $\Box y$,   and $\blacksquare y$ are new and unique for each relation $R_\Box$ and $R_\Diamond$, except for  $\Diamond a_C= a_{\Diamond C}$ and 
$\Box x_C= x_{\Box C}$. Side conditions for rules $\wedge_A^{-1}$, and $\vee_X^{-1}$ ensure we do not add new  joins or meets to concepts.

It is easy to check that the following rules are derivable in the calculus. 
\begin{tabular}{cc}
\AXC{$b:C_1 \vee C_2,$ \quad  $y::C_1,$ \quad $y::C_2$}
\LL{\fns $\vee_A$}
\UIC{$b I y$}
\DP 
\ & \  

 \AXC{$y::C_1 \wedge  C_2,$ \quad  $b:C_1,$ \quad $b:C_2$}
\RL{\fns $\wedge_X$}
\UIC{$b I y$}
\DP 
\\[3mm]
\AXC{$\Diamondblack b:C$}
\LL{\fns $adj_\Box$}
\UIC{$b: [R_\Box] C$}
\DP 
\ & \ 
\AXC{$\blacksquare y::C$}
\RL{\fns $adj_\Diamond$}
\UIC{$y::\langle R_\Diamond \rangle  C$}
\DP 
\\
\end{tabular}

The basic rule and the logical rules for the connectives encode the semantics of the logical connectives in LE-$\mathcal{ALC}$. The creation rule makes sure that, whenever successful, the algorithm outputs models with classifying object $a_C$ and feature $x_C$ for every concept  $C \in \mathcal{A}$. The adjunction rules and $I$-compatibility rules imply that every $R_\Box \in \mathcal{R}_\Box$ and $R_\Diamond \in \mathcal{R}_\Diamond$ are $I$-compatible. Appending and negative assertion rules encode the defining property of classifying objects and features of concepts. 

\begin{example}
Let $\mathcal{A} =\{b:[R_\Box][R_\Box] C_1, b:[R_\Box][R_\Box]C_2, y::[R_\Box] (C_1 \wedge C_2), \neg (b R_\Box y)\}$. It is easy to check that $\mathcal{A}$ has no LE-$\mathcal{ALC}$ model. The algorithm applies on $\mathcal{A}$ as follows (we only do the partial expansion to show that the clash exists):
\smallskip

{{\centering\footnotesize
\begin{tabular}{|c|c|c|l}
\cline{1-3}
    Rule & Premises & Added terms   \\
    \cline{1-3}
     Creation &      & $x_{\Box C_1}\!::\![R_\Box] C_1$, $x_{\Box C_2}\!::\![R_\Box] C_2$, $x_{C_1 \wedge C_2}::C_1 \wedge C_2$ \\
     $\Box$ & $x_{\Box C_i}\!::\![R_\Box] C_i$, $b\!:\![R_\Box][R_\Box] C_i$ & $b R_\Box x_{\Box C_i}$ & $i=1,2$\\
     $R_\Box$  &$b R_\Box x_{\Box C_i}$&   $\Diamondblack b I x_{\Box C_i}$ & $i=1,2$ \\
     Appending & $\Diamondblack b I x_{\Box C_i}$&  $\Diamondblack b:[R_\Box] C_i$ & $i=1,2$\\
     \cline{1-3}
\end{tabular}
\par}}
\smallskip\noindent
By applying the same process to $\Diamondblack b:[R_\Box] C_1$, $\Diamondblack b:[R_\Box] C_2$ and $x_{\Box C_1}::[R_\Box] C_1$, $x_{\Box C_2}::[R_\Box] C_2$, we add the terms $\Diamondblack \Diamondblack b:C_1$ and $\Diamondblack \Diamondblack b:C_2$ to the tableau. Then the further  tableau expansion is as follows: 

\smallskip
{{\centering 
\begin{tabular}{|c|c|c|l}
\cline{1-3}
Rule & Premises & Added terms & \\
\cline{1-3}
$\wedge_X$ &  $x_{C_1 \wedge C_2}::C_1 \wedge C_2$, $\Diamondblack\Diamondblack b:C_1$, $\Diamondblack\Diamondblack b:C_2$,  $\Diamondblack \Diamondblack b:C_1$ & $\Diamondblack \Diamondblack  b I x_{C_1 \wedge C_2}$\\
Appending & $\Diamondblack \Diamondblack  b I x_{C_1 \wedge C_2}$& $\Diamondblack \Diamondblack  b : C_1 \wedge C_2$ \\
$adj_\Box$ (twice)  & $\Diamondblack \Diamondblack  b : C_1 \wedge C_2$  & $b:[R_\Box][R_\Box](C_1 \wedge C_2)$\\
$\Box$ &  $b:[R_\Box][R_\Box] (C_1 \wedge C_2)$, $y::[R_\Box](C_1 \wedge C_2)$ & $b R_\Box y$\\
\cline{1-3}
\end{tabular}
\par}}
\smallskip
\noindent Thus, there is a clash between  $\neg (b R_\Box y)$ and $b R_\Box y$ in the expansion. 
\end{example}

\begin{example}
Let $\mathcal{A} = \{\neg(b I y), y::C_1, \neg (b:C_2), b:C_1 \vee C_2, b R_\Box y\}$.  The following table shows the tableau expansion for $\mathcal{A}$. Let $\mathcal{W} \coloneqq \{C_1, C_2, C_1 \vee C_2\}$.
\smallskip

{{\centering\footnotesize
\begin{tabular}{|c|c|c|}
\hline
     Rule & Premises & Added terms  \\
     \hline
       Creation  & & $a_C:C$, $x_C::C$, $C \in \mathcal{W}$\\
       Basic & $a_C:C$, $x_C::C$, $C \in \mathcal{W}$  & $a_C I x_C$, $C \in \mathcal{W}$\\ 
        Appending & $a_{C_1} I x_{C_1 \vee C_2}$, $a_{C_2} I x_{C_1 \vee C_2}$& $a_{C_1}:C_1 \vee C_2$,  $a_{C_2}:C_1 \vee C_2$\\
       $\vee_X$ & $x_{C_1 \vee C_2}::C_1 \vee C_2$ & $x_{C_1 \vee C_2}::C_1$, $x_{C_1 \vee C_2}::C_2$\\
       Basic & $a_{C_1}::C_1 \vee C_2$, $x_{C_1 \vee C_2}::C_1$ & $a_{C_1} I x_{C_1 \vee C_2}$\\
        Basic & $a_{C_2}::C_1 \vee C_2$, $x_{C_1 \vee C_2}::C_1$ & $a_{C_2} I x_{C_1 \vee C_2}$\\
        $R_\Box$ & $b R_\Box y$ & $\Diamondblack b I y$, $b I \Box y$\\
        $\neg_b$ & $\neg(b:C_2)$&  $\neg (b I x_{C_2})$\\
        \hline
\end{tabular}
\par}}
\smallskip
 Note that no expansion rule is applicable anymore. It is clear that the tableau does not contain any clashes. Thus, this ABox has a model. By the procedure described in Section \ref{ssec:Soundness}, this model is given by $\mathcal{R}_\Box = \{R_\Box\}, \mathcal{R}_\Diamond = \{R_\Diamond\}, A =\{a_{C_1}, a_{C_2}, a_{C_1 \vee C_2}, b, \Diamondblack b\}$, $X =\{x_{C_1}, x_{C_2}, x_{C_1 \vee C_2}, y, \Box y \}$, $I=\{(a_{C},x_{C})_{C \in \mathcal{W}},\\ (a_{C_1}, x_{C_1 \vee C_2}), (a_{C_2}, x_{C_1 \vee C_2}), (\Diamondblack b , y), (b, \Box y)\} $, $R_\Box =\{(b,y)\}$, $R_\Diamond = \varnothing$. 
\end{example}

\subsection{Termination of the tableaux algorithm}\label{ssec:Termination of the tableaux}
In this section, we show that  Algorithm~\ref{alg:tableaux} always terminates for any finite LE-$\mathcal{ALC}$ ABox $\mathcal{A}$. 
Since no rule branches, we only  need to check that the number of new individuals added by the expansion rules is finite. Note that the only rules for adding new individuals are the creation and adjunction rules. The creation rules add one new object and feature for every concept $C$ occurring  in the expansion of $\mathcal{A}$. Thus,  it is enough to show that the number of individuals and new concepts added by applying adjunction rules is finite. To do so, we will show that any individual constant introduced by means of any adjunction rule  will contain only finitely many modal operators applied to a constant occurring in $\mathcal{A}$ or added by the creation rule and any new concept  added will contain finitely many $\Box$ and $\Diamond$ operators applied to a concept occurring in $\mathcal{A}$. 
\begin{definition}\label{def:concept depth}
    The   $\Diamond$-depth $\Diamond_{\mathcal{D}}$  and $\Box$-depth $\Box_{\mathcal{D}}$ of  $C$ is defined as follows:
        
    \noindent 1.~If $C$  is an atomic  concept,  then $\Diamond_{\mathcal{D}}(C)=\Box_{\mathcal{D}}(C)=0$;
        
      \noindent 2.~$\Diamond_{\mathcal{D}}(\langle R_\Diamond \rangle C)=\Diamond_{\mathcal{D}}( C)+1 $ and $\Box_{\mathcal{D}}(\langle R_\Diamond \rangle C)=\Box_{\mathcal{D}}(C) $; 
      
      \noindent 3.~$\Diamond_{\mathcal{D}}([R_\Box] C)=\Diamond_{\mathcal{D}}(C) $ and $\Box_{\mathcal{D}}([R_\Box] C)=\Box_{\mathcal{D}}(C)+1 $;
      
    \noindent 4.~$\Diamond_{\mathcal{D}}(C_1 \vee C_2)= \max ( \Diamond_{\mathcal{D}}(C_1), \Diamond_{\mathcal{D}}(C_2))$ and $\Box_{\mathcal{D}}(C_1 \wedge C_2) = \max( \Box_{\mathcal{D}}(C_1), \Box_{\mathcal{D}}(C_2))$; 

     \noindent 5.~$\Diamond_{\mathcal{D}}(C_1 \wedge C_2)= \min ( \Diamond_{\mathcal{D}}(C_1), \Diamond_{\mathcal{D}}(C_2))$ and $\Box_{\mathcal{D}}(C_1 \vee C_2) = \min( \Box_{\mathcal{D}}(C_1), \Box_{\mathcal{D}}(C_2))$.
        
\end{definition}

\begin{definition}\label{def:Constant depth}
  The $\Box$-depth $\Box_{\mathcal{D}}$ and $\Diamond$-depth $\Diamond_{\mathcal{D}}$  of any constants $b$ and $y$ are:
  
        \noindent 1. if $b, y\in \mathcal{A}$,  {\small $\Box_{\mathcal{D}}(b) = \Diamond_{\mathcal{D}}(b)=\Box_{\mathcal{D}}(y) = \Diamond_{\mathcal{D}}(y)=0 $};

         \noindent  2. {\small $\Box_{\mathcal{D}}(a_C)=0$, $
         \Diamond_{\mathcal{D}}(x_C) =0$, 
         $\Box_{\mathcal{D}}(x_C)=-\Box_{\mathcal{D}}(C)$,  
         and $\Diamond_{\mathcal{D}}(a_C)=-\Diamond_{\mathcal{D}}(C)$}; 
        
         \noindent  3.
        {\small $\Box_{\mathcal{D}}(\Diamondblack b)=\Box_{\mathcal{D}}(b)+1 $},  {\small $\Box_{\mathcal{D}}(\Diamond b)=\Box_{\mathcal{D}}(b) $},
          {\small $\Diamond_{\mathcal{D}}(\Diamondblack b)=\Diamond_{\mathcal{D}}(b) $},  {\small $\Diamond _{\mathcal{D}}(\Diamond b)=\Diamond _{\mathcal{D}}(b)-1$}; 
          
          \noindent 4. {\small $\Box_{\mathcal{D}}(\Box y)=\Box_{\mathcal{D}}(y)-1$},  {\small $\Diamond_{\mathcal{D}}(\Box y )=\Diamond_{\mathcal{D}}(y) $}, 
       {\small $\Box_{\mathcal{D}}(\blacksquare y)=\Box_{\mathcal{D}}(y) $},   {\small $\Diamond _{\mathcal{D}}(\blacksquare y)=\Diamond _{\mathcal{D}}( y) +1$}. 
\end{definition}

 \begin{definition}
The {\em$\Box$-depth} (resp.~{\em $\Diamond$-depth}) of an ABox $\mathcal{A}$ is \\$\Box_{\mathcal{D}}(\mathcal{A}) \coloneqq \max \{ \Box_{\mathcal{D}}(C')\ |\ C' \in \mathcal{A}$\} (resp.~$\Diamond_{\mathcal{D}}(\mathcal{A}) \coloneqq \max \{ \Diamond_{\mathcal{D}}(C')\ |\ C' \in \mathcal{A}$\}). 
\end{definition}

The following Lemma bounds the length of concept and individual names appearing in a tableaux. 

\begin{lemma}\label{lem:depth}
For any individual names $b$, and $y$, and concept $C$  added during tableau expansion of ${\mathcal{A}}$, 
\begin{equation}\label{eq:IH 1}
 \Box_{\mathcal{D}}(C) \leq  \Box_{\mathcal{D}}(\mathcal{A})+1 \, 
 \mbox{ and } \, \Diamond_{\mathcal{D}}(C) \leq  \Diamond_{\mathcal{D}}(\mathcal{A})+1,  
\end{equation}

\begin{equation}\label{eq:IH 2}
-\Diamond_{\mathcal{D}}(\mathcal{A} )-1 \leq \Diamond_{\mathcal{D}} (b) \, \mbox{ and } \,   \Box_{\mathcal{D}}(b) \leq  \Box_{\mathcal{D}}(\mathcal{A})+1, 
\end{equation}

\begin{equation}\label{eq:IH 3}
-\Box_{\mathcal{D}}(\mathcal{A})-1 \leq \Box_{\mathcal{D}}(y)\, \mbox{ and } \, \Diamond_{\mathcal{D}}(y) \leq  \Diamond_{\mathcal{D}}(\mathcal{A})+1
\end{equation}

\end{lemma}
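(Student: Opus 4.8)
The plan is to prove the three pairs of inequalities \eqref{eq:IH 1}--\eqref{eq:IH 3} simultaneously by induction on the number of expansion rules applied, after first \emph{strengthening} the statement to a set of coupling invariants that tie the depth of each individual to the depth of the concept it is asserted to inhabit. Concretely, I would carry along, as the inductive hypothesis, the claims that every concept assertion $b:C$ in the current tableau satisfies $\Box_{\mathcal{D}}(C)+\Box_{\mathcal{D}}(b)\leq\Box_{\mathcal{D}}(\mathcal{A})$ and $\Diamond_{\mathcal{D}}(C)+\Diamond_{\mathcal{D}}(b)\geq 0$, and dually that every $y::C$ satisfies $\Diamond_{\mathcal{D}}(C)+\Diamond_{\mathcal{D}}(y)\leq\Diamond_{\mathcal{D}}(\mathcal{A})$ and $\Box_{\mathcal{D}}(C)+\Box_{\mathcal{D}}(y)\geq 0$. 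The reason for this reformulation is that, by Definitions \ref{def:concept depth} and \ref{def:Constant depth}, these sums are \emph{exactly conserved} by the modal machinery: passing from $b:[R_\Box]C$ to $\Diamondblack b:C$ (along an $R_\Box$-adjunction followed by appending) leaves $\Box_{\mathcal{D}}(C)+\Box_{\mathcal{D}}(b)$ unchanged, since $\Box_{\mathcal{D}}([R_\Box]C)=\Box_{\mathcal{D}}(C)+1$ while $\Box_{\mathcal{D}}(\Diamondblack b)=\Box_{\mathcal{D}}(b)+1$, and likewise for the $\langle R_\Diamond\rangle$/$\Diamond$ pair. Thus the apparently unbounded introduction of new individuals never raises the relevant sum.

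Next I would check, rule by rule, that the invariants survive each step. The base cases are the creation rule, where $a_C:C$ and $x_C::C$ satisfy the invariants \emph{with equality} (using $\Box_{\mathcal{D}}(a_C)=0$, $\Diamond_{\mathcal{D}}(a_C)=-\Diamond_{\mathcal{D}}(C)$ and their duals), together with the original assertions of $\mathcal{A}$, whose individuals have depth $0$ and whose concept depths are nonnegative and bounded by $\Box_{\mathcal{D}}(\mathcal{A})$, $\Diamond_{\mathcal{D}}(\mathcal{A})$. For $\wedge_A$ and $\vee_X$ the monotonicity of the $\max$/$\min$ clauses of Definition \ref{def:concept depth} keeps each sum on the correct side; for the inverse rules $\wedge_A^{-1}$, $\vee_X^{-1}$ the side condition that the reassembled concept lies in $\overline{\mathcal{A}}$ guarantees it already has depth bounded by $\Box_{\mathcal{D}}(\mathcal{A})$, $\Diamond_{\mathcal{D}}(\mathcal{A})$; the purely relational rules (basic, $I$-compatibility, and the adjunction rules in the part producing only relational terms) introduce no concept assertions and so leave the hypothesis untouched; and $adj_\Box$, $adj_\Diamond$ are the conservation steps already noted.

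Once the invariants hold, the stated bounds follow cheaply. Since every object satisfies $\Box_{\mathcal{D}}(b)\geq 0$ and every concept $\Box_{\mathcal{D}}(C)\geq 0$ (a direct induction on the clauses of the two definitions), the upper bound $\Box_{\mathcal{D}}(C)+\Box_{\mathcal{D}}(b)\leq\Box_{\mathcal{D}}(\mathcal{A})$ forces $\Box_{\mathcal{D}}(b)\leq\Box_{\mathcal{D}}(\mathcal{A})$ for any object occurring in a concept assertion, and, via the assertions $a_C:C$ and $x_C::C$ supplied by the creation rule, it forces $\Box_{\mathcal{D}}(C)\leq\Box_{\mathcal{D}}(\mathcal{A})$ and $\Diamond_{\mathcal{D}}(C)\leq\Diamond_{\mathcal{D}}(\mathcal{A})$ for every occurring concept; dually $\Diamond_{\mathcal{D}}(C)+\Diamond_{\mathcal{D}}(b)\geq 0$ with $\Diamond_{\mathcal{D}}(C)\leq\Diamond_{\mathcal{D}}(\mathcal{A})$ yields $\Diamond_{\mathcal{D}}(b)\geq -\Diamond_{\mathcal{D}}(\mathcal{A})$, and symmetrically for features. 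The extra $+1$ of slack in \eqref{eq:IH 1}--\eqref{eq:IH 3} then absorbs precisely those individuals occurring only inside relational terms, which sit one adjunction step ($\Diamondblack$, $\Diamond$, $\Box$, or $\blacksquare$) beyond an individual already carrying a concept assertion, e.g.\ $\Diamond_{\mathcal{D}}(\Diamond b)=\Diamond_{\mathcal{D}}(b)-1\geq -\Diamond_{\mathcal{D}}(\mathcal{A})-1$.

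The main obstacle I anticipate is exactly the treatment of the appending rules, whose premises are bare relational terms $b I x_C$ (resp.\ $a_C I y$) on which the coupling invariants say nothing directly, yet whose conclusions $b:C$ (resp.\ $y::C$) must be shown to satisfy them. Discharging this requires a provenance analysis of how relational terms are generated: one must verify that any $b R_\Box y$ is produced only by rule $\Box$ from witnessing assertions $b:[R_\Box]C$ and $y::C$ (possibly after recovering it through the $R_\Box$/$\Box y$ loop), that $y R_\Diamond b$ is analogously witnessed, and that every $b I y$ added is either so witnessed or traces back through an adjunction to such a pair. This is what lets me conclude that a fresh object acquires a $\Diamondblack$ — the only operator raising its $\Box$-depth — exactly when a witnessing concept contributes a $[R_\Box]$, so that the conserved sum caps the number of such applications; the same bookkeeping run in the four symmetric directions controls $\Diamond$ on objects and $\Box$, $\blacksquare$ on features. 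Making this witnessing argument precise is the delicate part, the remaining depth arithmetic being dictated termwise by Definitions \ref{def:concept depth} and \ref{def:Constant depth}.
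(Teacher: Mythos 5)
Your overall strategy --- strengthen the statement to per-term invariants coupling the depth of an individual with the depth of the concept it is asserted to inhabit, and run a simultaneous induction over rule applications --- is the same as the paper's, and your invariants for $b:C$ and $y::C$ are essentially items 4 and 5 of the paper's strengthened claim. But your proof has a genuine hole exactly where you flag it: the appending rules. Your inductive hypothesis speaks only about concept assertions, so when the rule $x_C$ fires on a premise $b\, I\, x_C$ there is \emph{no} inductive information available to bound $\Box_{\mathcal{D}}(b)+\Box_{\mathcal{D}}(C)$, and the ``provenance analysis'' you propose to fill this is not carried out --- it is the entire difficulty. Moreover, it cannot be done as a one-off unwinding: $I$-terms are generated by the basic rule from concept assertions, by the adjunction rules from $R_\Box$- and $R_\Diamond$-terms, which are in turn generated by the $\Box$/$\Diamond$ rules from concept assertions and by the $I$-compatibility rules from other $I$-terms, so the provenance chains interleave relational and concept terms arbitrarily. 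Tracking them on demand is just a disguised (and much messier) simultaneous induction over \emph{all} term types. The paper's fix is exactly that: it adds to the inductive hypothesis invariants on the relational terms themselves, namely $\Box_{\mathcal{D}}(b)-\Box_{\mathcal{D}}(y)\leq\Box_{\mathcal{D}}(\mathcal{A})+1$ and $\Diamond_{\mathcal{D}}(y)-\Diamond_{\mathcal{D}}(b)\leq\Diamond_{\mathcal{D}}(\mathcal{A})+1$ for $b\, I\, y$, with the shifted variants for $b\, R_\Box\, y$ and $y\, R_\Diamond\, b$. With these in place the appending case is immediate, since $\Box_{\mathcal{D}}(x_C)=-\Box_{\mathcal{D}}(C)$ turns the difference bound on $b\, I\, x_C$ into precisely the sum bound you want for $b:C$.

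A secondary issue is your choice of constants. You claim the conserved sums stay below $\Box_{\mathcal{D}}(\mathcal{A})$ and attribute the $+1$ slack in \eqref{eq:IH 2}--\eqref{eq:IH 3} solely to individuals ``occurring only inside relational terms, one adjunction step beyond an individual carrying a concept assertion.'' That picture is not justified: once the relational-term invariants are added (as they must be), the base case already forces the $+1$, because a term $b\, R_\Box\, y$ in the original ABox has $\Box_{\mathcal{D}}(b)+1-\Box_{\mathcal{D}}(y)=1$, which exceeds $\Box_{\mathcal{D}}(\mathcal{A})$ whenever $\Box_{\mathcal{D}}(\mathcal{A})=0$; this slack then propagates through the adjunction and appending rules into the concept-assertion invariants as well. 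The safe statement, and the one the paper proves, carries the $+1$ uniformly in all five invariants; the bounds of the lemma still follow by your concluding arithmetic (nonnegativity of $\Box_{\mathcal{D}}$ on objects and concepts, the creation-rule assertions $a_C:C$ and $x_C::C$, and duals).
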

 
\begin{proof}

We prove a stronger property of $ \overline{\mathcal{A}}$, obtained from $\mathcal{A}$ after any finite number of expansion steps.

1.~For any term $b I y \in \overline{\mathcal{A}}$, $\Box_{\mathcal{D}} (b) -  \Box_{\mathcal{D}} (y) \leq \Box_{\mathcal{D}}(\mathcal{A})+1$, and $ \Diamond_{\mathcal{D}} (y) -  \Diamond_{\mathcal{D}} (b) \leq \Diamond_{\mathcal{D}}(\mathcal{A})+1$.

2.~For any term $b R_\Box y \in \overline{\mathcal{A}}$, $\Box_{\mathcal{D}} (b) +1 -  \Box_{\mathcal{D}} (y) \leq \Box_{\mathcal{D}}(\mathcal{A})+1$, and $ \Diamond_{\mathcal{D}} (y) -  \Diamond_{\mathcal{D}} (b) \leq \Diamond_{\mathcal{D}}(\mathcal{A})+1$.

3.~For any term $y R_\Diamond b \in \overline{\mathcal{A}}$, $\Box_{\mathcal{D}} (b) -  \Box_{\mathcal{D}} (y) \leq \Box_{\mathcal{D}}(\mathcal{A})+1$, and $ \Diamond_{\mathcal{D}} (y) +1  -  \Diamond_{\mathcal{D}} (b) \leq \Diamond_{\mathcal{D}}(\mathcal{A})+1$.

4.~For any term $b:C \in \overline{\mathcal{A}}$, $ \Box_{\mathcal{D}} (b)  +  \Box_{\mathcal{D}} (C) \leq \Box_{\mathcal{D}}(\mathcal{A})+1$, and $ -\Diamond_{\mathcal{D}} (b) -  \Diamond_{\mathcal{D}} (C) \leq 0$.

5.~For any term $y::C \in \overline{\mathcal{A}}$, $ -\Box_{\mathcal{D}} (y)  - \Box_{\mathcal{D}} (C) \leq 0$, and $ \Diamond_{\mathcal{D}} (y) +  \Diamond_{\mathcal{D}} (C) \leq \Diamond_{\mathcal{D}}(\mathcal{A})+1$.

The proof is by simultaneous (over all the claims) induction on the number of expansion rules applied.
The base case is immediate from the definitions as every individual name in the original ABox has $\Box$-depth and $\Diamond$-depth zero,  and $\Box$-depth and $\Diamond$-depth are positive and bounded by $ \Box_{\mathcal{D}}(\mathcal{A})$ and $\Diamond_{\mathcal{D}}(\mathcal{A})$, respectively. 
 The inductive steps are proved below:

\textbf{creation rule:} it follows immediately from the definitions. 

\textbf{Basic  rule $I$:} in this case the new term $bIy$ is added from the terms $b:C$, and $y::C$. We need to show that item 1 holds for the term $bIy$. Using induction on $b:C$ and $y::C$, by items 4 and 5, we have  $\Box_{\mathcal{D}} (b)  +  \Box_{\mathcal{D}} (C) \leq \Box_{\mathcal{D}}(\mathcal{A})+1$, and $ -\Box_{\mathcal{D}} (y)  -  \Box_{\mathcal{D}} (C) \leq 0$.  By adding these inequalities, we get  $\Box_{\mathcal{D}} (b) - \Box_{\mathcal{D}} (y) \leq  \Box_{\mathcal{D}}(\mathcal{A})+1$. Similarly, by items 4 and 5, we have  $\Diamond_{\mathcal{D}} (y)  +  \Diamond_{\mathcal{D}} (C) \leq \Diamond_{\mathcal{D}}(\mathcal{A})+1$, and $-\Diamond_{\mathcal{D}} (b)  - \Diamond_{\mathcal{D}} (C) \leq 0$.  By adding these inequalities, we get  $\Diamond_{\mathcal{D}} (y) - \Diamond_{\mathcal{D}} (b) \leq  \Diamond_{\mathcal{D}}(\mathcal{A})+1$.

\textbf{Rules $\wedge_A$,  $\wedge_X$, $\wedge_A^{-1}$,   $\wedge_X^{-1}$:}  immediate from the definitions and the induction hypothesis.

\textbf{$I$-compatibility rules:}  we only give a proof for the rule $\Box y$. The other proofs are analogous. We  need to prove that item 2 holds for the newly added term $b R_\Box y$. By induction applied to the term $b I \Box y$ from item 1, we get 
$\Box_{\mathcal{D}} (b) -  \Box_{\mathcal{D}} (\Box y)= \Box_{\mathcal{D}} (b) -  \Box_{\mathcal{D}} ( y)
+1
\leq \Box_{\mathcal{D}}(\mathcal{A})+1$, and $\Diamond_{\mathcal{D}} (\Box y) -  \Diamond_{\mathcal{D}} (b) =\Diamond_{\mathcal{D}} (y) -  \Diamond_{\mathcal{D}} (b)
\leq \Diamond_{\mathcal{D}}(\mathcal{A})+1$. 

\textbf{$\Box$ and $\Diamond$ rules:}  we only give the proof for the $\Box$ case, as the proof for the $\Diamond$ rule is analogous. Using induction on $b:[R_\Box] C$ and $y::C$, by items 4 and 5, we have  $ \Box_{\mathcal{D}} (b)  +  \Box_{\mathcal{D}} ([R_\Box]C)  = \Box_{\mathcal{D}} (b)  +  \Box_{\mathcal{D}} (C) +1\leq \Box_{\mathcal{D}}(\mathcal{A})+1$,  and $ -\Box_{\mathcal{D}} (y)  -  \Box_{\mathcal{D}} (C) \leq 0$.  By adding them, we get $\Box_{\mathcal{D}} (b) +1 - \Box_{\mathcal{D}} (y) \leq  \Box_{\mathcal{D}}(\mathcal{A})+1$. Similarly, 
by items 4 and 5 we have $-\Diamond_{\mathcal{D}} (b)- \Diamond_{\mathcal{D}} ([R_\Box] C)=-\Diamond_{\mathcal{D}} (b)- \Diamond_{\mathcal{D}} (C) \leq 0$, and  $\Diamond_{\mathcal{D}} (y)+ \Diamond_{\mathcal{D}} (C) \leq \Diamond_{\mathcal{D}}(\mathcal{A})+1$. Hence,
$\Diamond_{\mathcal{D}} (y) - \Diamond_{\mathcal{D}} (b) \leq \Diamond_{\mathcal{D}}(\mathcal{A})+1$.

\textbf{Adjunction rules $R_\Box $ and $R_\Diamond$:} we only give the proof for $R_\Box$. The proof for the  $R_\Diamond$ case is analogous. 
We need to prove item 1 for the terms $\Diamondblack b I y$ and $b I \Box y$. By the induction hypothesis applied to $b R_\Box y$, by item 2 we have $\Box_{\mathcal{D}} (b) +1 -\Box_{\mathcal{D}} (y) = \Box_{\mathcal{D}} (\Diamondblack b)  -\Box_{\mathcal{D}} (y) = \Box_{\mathcal{D}} (b)  -\Box_{\mathcal{D}} (\Box y) \leq \Box_{\mathcal{D}}(\mathcal{A})+1$. Similarly, we also have $\Diamond_{\mathcal{D}} (y)- \Diamond_{\mathcal{D}} (b) = \Diamond_{\mathcal{D}} (y)- \Diamond_{\mathcal{D}} (\Diamondblack b) =\Diamond_{\mathcal{D}} (\Box y)- \Diamond_{\mathcal{D}} (b) \leq   \Diamond_{\mathcal{D}}(\mathcal{A})+1$.

\textbf{Appending rules $a_C$ and $x_C$:} we only give the proof for $x_C$. The proof for $a_C$  is analogous. We add a new term $b:C$ from the term $b I x_C$. As $\Box_{\mathcal{D}}(x_C)=- \Box_{\mathcal{D}}(C) $ and $\Diamond_{\mathcal{D}}(x_C)=- \Diamond_{\mathcal{D}}(C) $,
by the induction hypothesis and using Equation \eqref{eq:IH 3} for $x_C$, we immediately get the required conditions on $\Box_{\mathcal{D}}(C)$ and $\Diamond_{\mathcal{D}}(C)$. Moreover, by induction on term $b I x_C$ using item 1, we have  $-\Box_{\mathcal{D}}(\mathcal{A})-1 \leq \Box_{\mathcal{D}} (b) -  \Box_{\mathcal{D}} (x_C)  =\Box_{\mathcal{D}} (b) +  \Box_{\mathcal{D}} (x_C)  \leq \Box_{\mathcal{D}}(\mathcal{A})+1$ and $-\Diamond_{\mathcal{D}}(\mathcal{A} )-1 \leq  \Diamond_{\mathcal{D}} (x_C) -  \Diamond_{\mathcal{D}} (b) =  -\Diamond_{\mathcal{D}} (C) -  \Diamond_{\mathcal{D}} (b)   \leq \Diamond_{\mathcal{D}}(\mathcal{A})+1$.

\end{proof}

\begin{definition}\label{def:size}
For any concept ABox term of the form $t\equiv a:C$ or $t\equiv x::C$,  $size(t)= 1+|sub(C)|$.  For any relational term $\beta$, $size(\beta)= 2$. For any LE-$\mathcal{ALC}$ ABox $\mathcal{A}$, $size(\mathcal{A})=\sum_{t \in \mathcal{A}} size(t)$. 
\end{definition}
\begin{theorem}[Termination]\label{thm:termination}
For any ABox $\mathcal{A}$, the tableaux algorithm \ref{alg:main algo} terminates in a finite number of steps which is polynomial in $size(\mathcal{A})$.
\end{theorem}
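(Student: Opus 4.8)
The plan is to exploit the absence of branching together with the depth bounds of Lemma~\ref{lem:depth}. Since the commas in every rule are metalinguistic conjunctions, the algorithm never branches, and no rule ever deletes a term; moreover a rule is applied only when it genuinely adds a term not already present. Hence the number of rule applications is bounded by the total number of distinct ABox terms that can ever occur in the expansion $\overline{\mathcal{A}}$. As the terms are of the forms $bIy$, $bR_\Box y$, $yR_\Diamond a$, $b:C$, $x::C$ (and their negations), this number is at most polynomial in the number $M$ of distinct individuals and the number $N$ of distinct concepts occurring in $\overline{\mathcal{A}}$, times $|\mathcal{R}_\Box|+|\mathcal{R}_\Diamond|$. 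So it suffices to show that $M$ and $N$ are polynomial in $size(\mathcal{A})$, and that each applicability check and each rule application costs polynomial time.

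First I would bound $N$. By Lemma~\ref{lem:depth}, every concept $C$ added during the expansion satisfies $\Box_{\mathcal{D}}(C)\le \Box_{\mathcal{D}}(\mathcal{A})+1$ and $\Diamond_{\mathcal{D}}(C)\le \Diamond_{\mathcal{D}}(\mathcal{A})+1$, so its modal nesting depth is bounded by $size(\mathcal{A})+1$. The only rules creating genuinely new concepts are the derived adjunction rules $adj_\Box$ and $adj_\Diamond$, each of which prepends a single modal operator; tracking their firing shows that every concept occurring in $\overline{\mathcal{A}}$ is obtained from a subconcept $C_0\in sub(\mathcal{A})$ by prefixing it with a modal string that itself occurs as an initial modal segment of some subconcept of $\mathcal{A}$. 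Since there are at most $size(\mathcal{A})$ such base subconcepts and polynomially many such initial segments, $N$ is polynomial in $size(\mathcal{A})$.

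Next I would bound $M$. Again by Lemma~\ref{lem:depth} the $\Box$- and $\Diamond$-depths of every individual are bounded in absolute value by $\max\{\Box_{\mathcal{D}}(\mathcal{A}),\Diamond_{\mathcal{D}}(\mathcal{A})\}+1$, so every individual is a base individual (an original name, or a classifying $a_C$/$x_C$) prefixed by a bounded modal string built from $\Diamondblack,\Diamond$ (for objects) or $\Box,\blacksquare$ (for features). The adjunction rules make $\Diamondblack b,\Diamond b,\Box y,\blacksquare y$ unique per individual and per relation, and the identifications $\Diamond a_C=a_{\langle R_\Diamond\rangle C}$ and $\Box x_C=x_{[R_\Box]C}$ fold diamond/box prefixes of classifying individuals back into classifying individuals; combined with the generation analysis above, this shows that only the modal strings arising as initial segments of subconcepts of $\mathcal{A}$ actually occur. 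Since the number of base individuals is $O(size(\mathcal{A}))+O(N)$ (one classifying pair per occurring concept) and the number of admissible prefixes is polynomial, $M$ is polynomial in $size(\mathcal{A})$ as well.

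Putting these together, the total number of distinct terms, and hence the number of steps, is polynomial in $size(\mathcal{A})$; the per-step cost (checking applicability and adding a term) is clearly polynomial, giving the claimed bound. The main obstacle is exactly the step I flagged in bounding $N$ and $M$: the depth bounds of Lemma~\ref{lem:depth} by themselves only cap the \textit{length} of the modal prefixes, and a naive count of all prefixes of bounded length over $\{\Diamondblack,\Diamond\}$ (resp.\ $\{\Box,\blacksquare\}$) and the relations is exponential in that length. The crux is therefore to prove that the prefixes actually generated are polynomially many, i.e.\ that each new individual and each new concept is forced, through the chain of rule applications that created it, to correspond to a subconcept of $\mathcal{A}$ together with one of its own initial modal segments, so that the exponential branching over relation choices and operator orderings never materialises.
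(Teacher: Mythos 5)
Your overall strategy is the same as the paper's: no rule branches or deletes, so the number of steps is bounded by the number of distinct terms, which is controlled by the number of distinct individuals and concepts, which in turn is controlled via the depth bounds of Lemma~\ref{lem:depth}. The paper's proof passes from ``every new concept (resp.\ individual) carries at most $\Box_{\mathcal{D}}(\mathcal{A})+1$ boxes and $\Diamond_{\mathcal{D}}(\mathcal{A})+1$ diamonds (resp.\ black/white operators)'' directly to the bound $size(\mathcal{A})\cdot(\Box_{\mathcal{D}}(\mathcal{A})+\Diamond_{\mathcal{D}}(\mathcal{A}))$ on the total number of new constants and concepts, and from there to a bound of $O\bigl((size(\mathcal{A})\cdot(\Box_{\mathcal{D}}(\mathcal{A})+\Diamond_{\mathcal{D}}(\mathcal{A})))^2\cdot(|\mathcal{R}_\Box|+|\mathcal{R}_\Diamond|)\bigr)$ on the whole expansion.

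The gap you flag at the end is real, and your proposal does not close it: Lemma~\ref{lem:depth} only bounds the \emph{length} of the modal prefix attached to a base individual or base concept, and the number of words of bounded length over the alphabet of available operators (indexed by the relations in $\mathcal{R}_\Box\cup\mathcal{R}_\Diamond$) is exponential in that length. Your claims that every concept in $\overline{\mathcal{A}}$ is a subconcept of $\mathcal{A}$ prefixed by an initial modal segment of some subconcept of $\mathcal{A}$, and the analogous statement for individuals, are exactly what is needed, but you assert them (``tracking their firing shows\dots'') rather than prove them. A proof would have to trace the generation chains --- e.g.\ that a genuinely new concept $[R_\Box]C$ only arises via $adj_\Box$ from a term $\Diamondblack b:C$, which itself requires a term $b R_\Box y$, which requires a $[R_\Box]$-prefix already present in the tableau --- so that the realized prefixes are anchored to the subformula structure of $\mathcal{A}$ rather than ranging over all bounded-length words. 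You should know that the paper's own proof does not supply this argument either: it reads the polynomial count directly off the depth bounds, leaving the multiplicity of interleavings and relation choices implicit. So your proposal is faithful to the paper's route and is, if anything, more candid about where the remaining work lies; but as written it is incomplete at precisely the step you yourself identify as the crux.
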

\begin{proof}

New individuals are added to the tableau only in the following ways:

\noindent(1) individuals of the form $a_C$ or $x_C$ can be added by creation rules;

\noindent(2)  individuals of the form $\Box y$, $\blacksquare y$, $\Diamond b$, and $\Diamondblack b$ can be added through the expansion rules for $b R_\Box x$ and $y R_\Diamond a$. 

New concepts can only be added by the appending rules through some constant that was already added. Note that no new propositional connective is added by any of the rules. Thus, the only concept  that can appear are added by the application of $\Box$ and $\Diamond$  operators to concepts already appearing in $\mathcal{A}$. By Equation \eqref{eq:IH 1} of Lemma \ref{lem:depth}, the maximum number of $\Box$ or $\Diamond$ connectives appearing in any concept  added is bounded by $\Box_{\mathcal{D}}(\mathcal{A})+1$ and $\Diamond_{\mathcal{D}}(\mathcal{A})+1$, respectively. Also, by Equations \eqref{eq:IH 2} and  \eqref{eq:IH 3} of Lemma \ref{lem:depth} the number of $\Diamondblack$ (resp.~$\Diamond$, $\Box$, $\blacksquare$),  occurring in a constant $b$ (resp.~$b$, $y$, $y$) is bounded by $\Box_{\mathcal{D}}(\mathcal{A})+1$ (resp.~$\Diamond_{\mathcal{D}}(\mathcal{A})+1$, $\Box_{\mathcal{D}}(\mathcal{A})+1$, $\Diamond_{\mathcal{D}}(\mathcal{A})+1$).  Therefore, 
  the total number of new constant and concepts occurring in an expansion of $\mathcal{A}$ is bounded by $size(\mathcal{A})* (\Box_{\mathcal{D}}(\mathcal{A}) + \Diamond_{\mathcal{D}}(\mathcal{A}))$. Thus, only finitely many individuals of type (2) are added. Overall, the size of the tableau expansion (and hence the model) is $O((size(\mathcal{A})* (\Box_{\mathcal{D}}(\mathcal{A}) + \Diamond_{\mathcal{D}}(\mathcal{A}))^2*(|\mathcal{R}_\Box|+|\mathcal{R}_\Diamond|) )$. 
Since the tableaux algorithm for LE-$\mathcal{ALC}$ does not involve any branching, the above theorem implies that the time complexity of checking the consistency of an LE-$\mathcal{ALC}$ ABox $\mathcal{A}$  using the tableaux algorithm is $Poly(size(\mathcal{A}))$.
\end{proof}

 \subsection{Soundness  of the tableaux algorithm}\label{ssec:Soundness}
 For any consistent ABox $\mathcal{A}$, we let its {\em completion} $\overline{\mathcal{A}}$ be its maximal expansion (which exists due to termination) after post-processing. 
 If there is no clash in  $\overline{\mathcal{A}}$,  we construct a model $(\mathbb{F},\cdot^\mathrm{I})$ where $A$ and $X$ are the sets of names of objects and features occurring in the expansion, and for any $a \in A$, $x \in X$, and any role names $R_\Box \in \mathcal{R}_\Box$, $R_\Diamond \in \mathcal{R}_\Diamond$
we have $a I x$, $a R_\Box x$, $x R_\Diamond a$ iff such relational terms explicitly occur in  $\overline{\mathcal{A}}$. We also add a new element  $x_\bot$ (resp.~$a_\top$) to $X$ (resp.~$A$) such that it is not related to any element of $A$ (resp.~$X$) by any relation. Let $\mathbb{F}= (A,X,I, \mathcal{R}_\Box, \mathcal{R}_\Diamond)$ be the relational structure  obtained in this manner. We define an interpretation $\mathrm{I}$ on it as follows. For any object name $a$, and feature name $x$,  we let $a^\mathrm{I}\coloneqq a$ and $x^\mathrm{I}\coloneqq x$. For any atomic
concept $D$,  we  define  $D^{\mathrm{I}}= ({x_D}^\downarrow, {a_D}^\uparrow)$. Next, we show that $\mathrm{I}$ is  a valid interpretation for LE-$\mathcal{ALC}$. To this end, we need to show that $\mathbb{F}$ is an enriched formal context, i.e.~that all $R_\Box$ and $R_\Diamond$ are $I$-compatible, and that $D^{\mathrm{I}}$ is  a concept in the concept lattice $\mathbb{P}^+$ of $\mathbb{P} = (A, X, I)$. The latter condition is shown in the next lemma, and the former in the subsequent one.
 \begin{lemma}\label{lem:atomic concepts}
$x_D^{\downarrow\uparrow}=a_D^\uparrow$ and $a_D^{\uparrow\downarrow}=x_D^\downarrow$ for any $D \in \mathcal{C}$.
\end{lemma}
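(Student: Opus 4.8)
The plan is to prove the two Galois-stability identities $x_D^{\downarrow\uparrow}=a_D^\uparrow$ and $a_D^{\uparrow\downarrow}=x_D^\downarrow$ by unfolding the definitions of $(\cdot)^\uparrow$ and $(\cdot)^\downarrow$ in the constructed context $\mathbb{P}=(A,X,I)$, and reducing each identity to a pair of set-inclusions that can be read off from the appending and basic rules together with the defining property of the classifying object $a_D$ and classifying feature $x_D$. Recall that, by construction, $I$ holds exactly when the corresponding relational term occurs in $\overline{\mathcal{A}}$, and that the creation rule has placed $a_D:D$ and $x_D::D$ into $\overline{\mathcal{A}}$, so by the basic rule $a_D I x_D$ holds. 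The heart of the matter is the defining property recorded just before the algorithm: for any $a\in A$ and $x\in X$, we have $a I x_D$ iff $a:D$ (via the appending rule $x_D$ and the negative-assertion rule $\neg b$), and dually $a_D I x$ iff $x::D$.

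\textbf{First} I would prove $a_D^{\uparrow\downarrow}=x_D^\downarrow$. For the inclusion $a_D^\uparrow \supseteq$-direction leading to $a_D^{\uparrow\downarrow}\subseteq x_D^\downarrow$: since $a_D I x_D$, we have $x_D\in a_D^\uparrow$, and applying $(\cdot)^\downarrow$ (which is antitone) gives $a_D^{\uparrow\downarrow}\subseteq \{x_D\}^\downarrow \cap \cdots$; more cleanly, $x_D\in a_D^\uparrow$ yields $a_D^{\uparrow\downarrow}=(a_D^\uparrow)^\downarrow\subseteq (x_D)^\downarrow=x_D^\downarrow$ by antitonicity applied to the singleton inclusion $\{x_D\}\subseteq a_D^\uparrow$. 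For the reverse inclusion $x_D^\downarrow\subseteq a_D^{\uparrow\downarrow}$, it suffices (by the closure property $a_D\subseteq a_D^{\uparrow\downarrow}$ and antitonicity) to show $a_D^\uparrow\subseteq x_D^{\downarrow\uparrow}$, which in turn follows once I establish the key claim that $x_D^\downarrow$ is exactly $\{a\in A\mid a I x_D\}=\{a\mid a:D\}$ and that every feature in $a_D^\uparrow$ is incident with every such object. Concretely, I would show the two sets coincide by a direct membership chase: $a\in x_D^\downarrow$ iff $a I x_D$ iff $a:D$ (defining property), and separately $x\in a_D^\uparrow$ iff $a_D I x$ iff $x::D$; then the basic rule gives $a I x$ whenever $a:D$ and $x::D$, which is precisely the incidence needed to conclude $x_D^\downarrow\subseteq a_D^{\uparrow\downarrow}$ and symmetrically $a_D^\uparrow\subseteq x_D^{\downarrow\uparrow}$.

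\textbf{The symmetric identity} $x_D^{\downarrow\uparrow}=a_D^\uparrow$ is then obtained by the dual argument, swapping the roles of objects and features, the appending rules $x_C$ and $a_C$, and the two directions of the defining property; alternatively, it follows formally from the first identity by applying $(\cdot)^\uparrow$ and using the standard Galois-connection fact that $(\cdot)^{\uparrow\downarrow\uparrow}=(\cdot)^\uparrow$. I would likely just invoke this latter route to avoid repeating the chase: from $a_D^{\uparrow\downarrow}=x_D^\downarrow$, apply $(\cdot)^\uparrow$ to both sides to get $a_D^{\uparrow\downarrow\uparrow}=x_D^{\downarrow\uparrow}$, and then simplify the left side to $a_D^\uparrow$ using $(\cdot)^{\uparrow\downarrow\uparrow}=(\cdot)^\uparrow$.

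\textbf{The main obstacle} I anticipate is being fully rigorous about the defining property $a I x_D \Leftrightarrow a:D$ in the completed ABox $\overline{\mathcal{A}}$: one direction comes from the appending rule (if $a I x_D$ then $a:D$ is added), but the converse (if $a:D$ then $a I x_D$) requires knowing that $a:D$ and $x_D::D$ force $a I x_D$ via the basic rule, which is fine, yet one must also ensure no spurious incidences with $x_D$ arise for objects not in $D$ — this is exactly what the negative-assertion machinery and the maximality of $\overline{\mathcal{A}}$ guarantee. Making this precise, and ensuring the argument does not implicitly assume the very stability we are trying to prove, is the delicate point; everything else is a routine antitonicity-and-closure computation in the Galois connection.
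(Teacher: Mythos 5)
Your proposal is correct and takes essentially the same route as the paper's proof: establish $a_D I x_D$ via the creation and basic rules to obtain $a_D^{\uparrow\downarrow}\subseteq x_D^{\downarrow}$ by antitonicity, and obtain the reverse inclusion by chasing $b I x_D$ and $a_D I y$ through the appending and basic rules to conclude $b I y$. The only (harmless) difference is that you make the second identity explicit via $(\cdot)^{\uparrow\downarrow\uparrow}=(\cdot)^{\uparrow}$ where the paper leaves it to symmetry; also note the negative-assertion rule you mention is not actually needed, since the appending rule alone already rules out ``spurious'' incidences with $x_D$.
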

\begin{proof}
For any atomic concept $D$, we have   $a_D I x_D$  by the creation and appending rules. Therefore, $I^{(0)}[I^{(1)}[a_D]] \subseteq I^{(0)}[x_D]$.  Suppose  $a_D I y$ and $b I x_D$ are in $\overline{\mathcal{A}}$, then by the appending and basic rule we get $b I y \in \overline{\mathcal{A}}$. Therefore, $I^{(0)}[x_D]\subseteq I^{(0)}[I^{(1)}[a_D]] $. Hence the statement is proved.

\end{proof}

\begin{lemma} \label{lem:Galois-stability}
All the relations    $R_\Box \in \mathcal{R}_\Box$ and $R_\Diamond \in \mathcal{R}_\Diamond$ in $\mathbb{F}= (\mathbb{P}, \mathcal{R}_\Box, \mathcal{R}_\Diamond)$ are $I$-compatible.
\end{lemma}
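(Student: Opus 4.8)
The plan is to verify, for each $R_\Box\in\mathcal{R}_\Box$ and $R_\Diamond\in\mathcal{R}_\Diamond$, that the four sets $R_\Box^{(0)}[x]$, $R_\Box^{(1)}[a]$, $R_\Diamond^{(0)}[a]$, $R_\Diamond^{(1)}[x]$ demanded by $I$-compatibility are fixed points of the relevant Galois closure operator in $\mathbb{P}=(A,X,I)$. Since closure operators are extensive, the inclusions $B\subseteq\ud{B}$ and $Y\subseteq\du{Y}$ hold automatically, so in each case I only need the reverse inclusion. The single mechanism driving all four arguments is that the completion $\overline{\mathcal{A}}$ is closed under both the adjunction rules and the $I$-compatibility rules: the adjunction rule deposits a designated individual ($\Diamondblack b$, $\Box y$, $\Diamond b$, or $\blacksquare y$) into the appropriate $I$-closure of the section, and the matching compatibility rule then recovers membership in the section itself.

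I would first carry out the representative case $B\coloneqq R_\Box^{(0)}[y]=\{a\in A\mid a R_\Box y\}\subseteq A$. For every $a\in B$, the adjunction rule $R_\Box$ applied to $a R_\Box y\in\overline{\mathcal{A}}$ yields $a I \Box y\in\overline{\mathcal{A}}$, where $\Box y$ is the feature designated for the pair $(R_\Box,y)$; hence $\Box y\in\up{B}=I^{(1)}[B]$. Now take any $a'\in\ud{B}=I^{(0)}[\up{B}]$: since $\Box y\in\up{B}$ we get $a' I \Box y$, and the $I$-compatibility rule $\Box y$ then returns $a' R_\Box y$, i.e.\ $a'\in B$. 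Thus $\ud{B}\subseteq B$ and $B$ is Galois-stable. The other three cases are entirely symmetric, each pairing one adjunction rule with its matching compatibility rule: $R_\Box^{(1)}[b]=\{x\mid b R_\Box x\}$ uses the term $\Diamondblack b I x$ produced by rule $R_\Box$ together with the compatibility rule $\Diamondblack b$; $R_\Diamond^{(0)}[a]=\{x\mid x R_\Diamond a\}$ uses $\Diamond a I x$ from rule $R_\Diamond$ with the compatibility rule $\Diamond b$; and $R_\Diamond^{(1)}[x]=\{a\mid x R_\Diamond a\}$ uses $a I \blacksquare x$ from rule $R_\Diamond$ with the compatibility rule $\blacksquare y$. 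In each case the designated individual lands in the correct closure of the section and the compatibility rule supplies the reverse inclusion.

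I expect the only genuinely necessary extra check to be the degenerate case in which a section is empty: then the designated individual is never created, and one must argue directly that $\varnothing$ is Galois-stable. This is precisely what the extra elements $x_\bot$ and $a_\top$ secure — since $x_\bot$ is $I$-related to no object, no object is $I$-related to every feature, so $\varnothing^{\uparrow}=X$ and $X^{\downarrow}=\varnothing$, giving $\ud{\varnothing}=\varnothing$ (and dually for empty subsets of $X$ via $a_\top$). Beyond this case, the ``hard part'' is merely the bookkeeping of matching rule names and orienting the inclusions correctly; there is no deeper obstacle, because the expansion rules were designed exactly so that the equivalences $a R_\Box x \Leftrightarrow \Diamondblack a I x \Leftrightarrow a I \Box x$ (and their $R_\Diamond$-analogues) hold in $\overline{\mathcal{A}}$, and it is these equivalences that force $I$-compatibility.
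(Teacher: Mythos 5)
Your proof is correct and follows essentially the same route as the paper's: both rest on the observation that the adjunction rule and the matching $I$-compatibility rule make $R_\Box^{(0)}[y]$ coincide with $I^{(0)}[\{\Box y\}]$ (and similarly for the other three sections), which forces Galois-stability, with the empty case handled by the artificially added $x_\bot$ and $a_\top$. Your version merely unpacks the stability of $I^{(0)}[\{\Box y\}]$ into the explicit inclusion $B^{\uparrow\downarrow}\subseteq B$, which the paper leaves implicit.
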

\begin{proof} 
We prove that $ R_\Box^{(0)}[y]$ is Galois-stable for every $y$ appearing in $\overline{\mathcal{A}}$. All the remaining proofs are similar. Consider the case where $\Box y$ appears in  the tableaux expansion: if $b I \Box y$ (resp.~$b R_\Box y)$ is in $\overline{\mathcal{A}}$,
 then $b R_\Box y$ (resp.~$b I \Box y$) is added by the rule $\Box y$ (resp.~$R_\Box$).

 In the case where $\Box y$  does not appear in $\overline{\mathcal{A}}$, then there is no term of the form $b R_\Box y$ in $\overline{\mathcal{A}}$. Therefore, we have that $R_\Box^{(0)}[y] = \emptyset=I^{(0)}[X]$ is Galois-stable, because we have a feature $x_\bot$ which is not related to any of the objects. 
\end{proof}

From the  lemmas above, it immediately follows that  the tuple $M =(\mathbb{F}, \cdot^\mathrm{I})$, with $\mathbb{F} $ and $\cdot^\mathrm{I}$ defined at the beginning of the present section, is a model for LE-$\mathcal{ALC}$. The following lemma  states that the interpretation of any concept $C$ in the model $M$ is completely determined by the terms of the form $b I x_C$ and $a_C I y$ occurring in the tableau expansion.

\begin{lemma} \label{lem:Soundness pre}
Let  $M =(\mathbb{F}, \cdot^\mathrm{I})$ be the model defined by the construction
above.  Then for any concept $C$  and individuals $b$, $y$ occurring in ${\mathcal{A}}$, 

(1) $b \in \val{C}_M$ iff \ $b I x_C \in  \overline{\mathcal{A}}$ \quad\quad (2) $y \in \descr{C}_M$ iff \ $a_C I  y\in  \overline{\mathcal{A}}$.
\end{lemma}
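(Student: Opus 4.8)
The plan is to prove (1) and (2) simultaneously by induction on the structure of $C$, the two biconditionals being understood to range over all objects $b$ and features $y$ of the constructed model $M$ (i.e.\ over all individuals occurring in $\overline{\mathcal{A}}$, which is what the bridge argument below requires). The base case $C = D$ atomic is immediate from the definition $D^{\mathrm{I}} = (x_D^\downarrow, a_D^\uparrow)$: indeed $\val{D}_M = x_D^\downarrow = I^{(0)}[x_D] = \{b \mid b I x_D\}$ gives (1), and $\descr{D}_M = a_D^\uparrow = I^{(1)}[a_D] = \{y \mid a_D I y\}$ gives (2), where Lemma~\ref{lem:atomic concepts} guarantees that $D^{\mathrm{I}}$ is a genuine formal concept of $\mathbb{P}$.

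The first key observation is that, for a fixed $C$, statements (1) and (2) are interderivable through a uniform ``bridge'' argument that uses only the creation, basic, and appending rules. Since $(\val{C}_M, \descr{C}_M)$ is a formal concept, we have $\descr{C}_M = \val{C}_M^\uparrow$ and $\val{C}_M = \descr{C}_M^\downarrow$. Assume (1) holds for $C$. To derive (2): if $a_C I y \in \overline{\mathcal{A}}$ then the appending rule $a_C$ yields $y :: C$, and for any $b \in \val{C}_M$, (1) together with the appending rule $x_C$ gives $b : C$, so the basic rule $I$ yields $b I y$; hence $y \in \val{C}_M^\uparrow = \descr{C}_M$. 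Conversely, the creation rule gives $a_C : C$ and $x_C :: C$, whence $a_C I x_C \in \overline{\mathcal{A}}$ by the basic rule, so by (1) $a_C \in \val{C}_M$; thus $y \in \descr{C}_M = \val{C}_M^\uparrow$ forces $a_C I y$. The symmetric argument derives (1) from (2). Consequently, in the inductive step it suffices, for each connective, to establish whichever of (1)/(2) is aligned with the semantic clause of the top operator, the other then following from the bridge (so the derived rules $\wedge_X$, $\vee_A$, $adj_\Box$, $adj_\Diamond$ need not be invoked).

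For the propositional connectives I would use the inverse rules. For $C = C_1 \wedge C_2$, since $\val{C_1 \wedge C_2}_M = \val{C_1}_M \cap \val{C_2}_M$, the inductive hypothesis (1) for $C_1, C_2$ and the appending rules turn ``$b \in \val{C_1 \wedge C_2}_M$'' into ``$b : C_1$ and $b : C_2$ occur in $\overline{\mathcal{A}}$''; the rule $\wedge_A$ and its inverse $\wedge_A^{-1}$ (whose side condition $C_1 \wedge C_2 \in \overline{\mathcal{A}}$ holds because $C$ occurs in $\mathcal{A}$) then give the equivalence with $b : C_1 \wedge C_2$, i.e.\ with $b I x_{C_1 \wedge C_2}$. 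The case $C = C_1 \vee C_2$ is dual, using $\descr{C_1 \vee C_2}_M = \descr{C_1}_M \cap \descr{C_2}_M$ together with $\vee_X$ and $\vee_X^{-1}$. For the modal cases I would exploit the adjunction rules and the identifications $\Box x_{C'} = x_{[R_\Box] C'}$ and $\Diamond a_{C'} = a_{\langle R_\Diamond \rangle C'}$. For $C = [R_\Box] C'$, the clause $\val{[R_\Box] C'}_M = R_\Box^{(0)}[\descr{C'}_M]$ and the inductive hypothesis (2) for $C'$ reduce ``$b \in \val{[R_\Box]C'}_M$'' to ``$b R_\Box y$ for every $y$ with $y :: C'$''. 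The backward direction then follows from the appending rule and rule $\Box$; for the forward direction, instantiating at the classifying feature $x_{C'}$ yields $b R_\Box x_{C'}$, and the adjunction rule $R_\Box$ produces $b I \Box x_{C'} = b I x_{[R_\Box] C'}$, which is exactly the right-hand side of (1). The case $C = \langle R_\Diamond \rangle C'$ is dual, establishing (2) first via $\descr{\langle R_\Diamond \rangle C'}_M = R_\Diamond^{(0)}[\val{C'}_M]$, rule $\Diamond$, the adjunction rule $R_\Diamond$, and the identification $\Diamond a_{C'} = a_{\langle R_\Diamond \rangle C'}$.

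The main obstacle is the forward direction of the modal cases: there one must manufacture a concrete relational or incidence term out of a purely semantic, universally quantified condition. The device that makes this work is that the classifying feature $x_{C'}$ (resp.\ classifying object $a_{C'}$) serves, by the inductive hypothesis, as a canonical witness for $\descr{C'}_M$ (resp.\ $\val{C'}_M$), so that a single instantiation suffices; the adjunction rules together with the identifications $\Box x_{C'} = x_{[R_\Box]C'}$ and $\Diamond a_{C'} = a_{\langle R_\Diamond\rangle C'}$ then convert the resulting relational term into an incidence with the classifying feature/object of $C$ itself. A secondary point requiring care is the bookkeeping of the induction: one must check that within each connective the ``aligned'' biconditional is proved purely from the hypotheses on proper subconcepts, so that the bridge argument — which trades (1) for (2) within the same $C$ — never becomes circular.
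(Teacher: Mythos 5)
Your proof is correct and follows essentially the same route as the paper's: a simultaneous structural induction anchored on the classifying objects/features, with the creation/basic/appending rules for the base case, the rules $\wedge_A$, $\wedge_A^{-1}$ (and duals) for the lattice cases, and the adjunction and $\Box$/$\Diamond$ rules for the modal cases. Your uniform ``bridge'' deriving (2) from (1) within a fixed $C$ is exactly what the paper carries out inline in part (2) of each inductive case, and your remark that the biconditionals must range over all individuals of $\overline{\mathcal{A}}$ (not just those of $\mathcal{A}$) makes explicit a point the paper uses implicitly.
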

\begin{proof}
 The proof is by simultaneous induction on the complexity of $C$. The base case is obvious by the construction of $M$. 

For the induction step, we distinguish four cases.

\noindent 1.~Suppose $C=C_1 \wedge C_2$. 

For the first claim, notice that $b \in \val{C_1 \wedge C_2}$ iff $b \in \val{C_1}$, and $b \in \val{C_2}$.  By the induction hypothesis, this is equivalent to $b Ix_{C_1}, b Ix_{C_2} \in 
\overline{\mathcal{A}}$.  Suppose $b \in \val{C_1 \wedge C_2}$. Then $b Ix_{C_1}, b Ix_{C_2} \in 
\overline{\mathcal{A}}$. By the appending rules, $b:C_1, b:C_2 \in \overline{\mathcal{A}} $. As $C_1 \wedge C_2$ appears in $\overline{\mathcal{A}}$, by rule $\wedge_A^{-1}$, $b:C_1 \wedge C_2$ is added which by the creation and appending rule implies $b I x_{C_1 \wedge C_2} \in \overline{\mathcal{A}}$. Conversely, suppose $b I x_{C_1 \wedge C_2} \in \overline{\mathcal{A}}$.  By the appending  rule we have  $b:C_1 \wedge C_2 \in  \overline{\mathcal{A}}$. By rule $\wedge_A$, we have  $b:C_1, b:C_2 \in  \overline{\mathcal{A}}$. By the creation and basic rule we have  $b I x_{C_1}, b I x_{C_2} \in  \overline{\mathcal{A}}$. By induction hypothesis, this implies $b \in \val{C_1}$, and  $b \in \val{C_2}$. By definition, this implies $b \in \val{C_1 \wedge C_2}$.

To prove item 2, $y \in \descr{C_1 \wedge C_2}$ iff  $\forall b (b \in \val{C_1 \wedge C_2} \implies b I y \in \overline{\mathcal{A}})$.  By the proof above, this is equivalent to $\forall b (b I x_{C_1 \wedge C_2} \in \overline{\mathcal{A}} \implies b I y \in \overline{\mathcal{A}} )$. Suppose  $y \in \descr{C_1 \wedge C_2}$. By creation and basic rule, $ a_{C_1 \wedge C_2} I  x_{C_1 \wedge C_2} \in \overline{\mathcal{A}}$. By $\forall b (b I x_{C_1 \wedge C_2} \in \overline{\mathcal{A}} \implies b I y \in \overline{\mathcal{A}} )$, we get $a_{C_1 \wedge C_2} I  y$. Conversely, suppose $a_{C_1 \wedge C_2} I  y \in \overline{\mathcal{A}} $. From $b I x_{C_1 \wedge C_2} \in \overline{\mathcal{A}}$ and by the appending and basic rule, we get $b I y \in \overline{\mathcal{A}}$. Hence the statement is proved. 

\noindent 2.~The proof for $C=C_1 \vee C_2$ is similar to the previous case.

\noindent 3. Suppose $C=[R_\Box]C_1$.

  For the first claim, notice that $b \in \val{[R_\Box]C_1}$ iff $\forall y (y \in \descr{C} \implies b R_\Box y)$.  By the induction hypothesis, this is equivalent to $\forall y (a_C I y\in \overline{\mathcal{A}} \implies b R_\Box y \in \overline{\mathcal{A}})$.  Suppose $b \in \val{[R_\Box]C_1}$. By the creation and basic rule, $a_{C_1} I x_{C_1} \in \overline{\mathcal{A}}$.  Then by $\forall y (a_C I y\in \overline{\mathcal{A}} \implies b R_\Box y \in \overline{\mathcal{A}})$, we have $b R_\Box x_{C_1} \in \overline{\mathcal{A}}$, meaning that via rule $R_\Box$ we get $b I  x_{\Box C_1} \in \overline{\mathcal{A}}$.  Conversely, suppose $b I x_{\Box C_1}$ and $a_{C_1} I y$ occur in $\overline{\mathcal{A}}$.  By rule $\Box y$, we have $\Diamondblack b I x_{C_1} \in \overline{\mathcal{A}}$. By the appending and basic rule, $\Diamondblack b I y \in \overline{\mathcal{A}}$. By rule $\Diamondblack b$, we get $b R_\Box y \in \overline{\mathcal{A}}$.

To prove item 2, $y \in \descr{[R_\Box]C_1}$ iff  $\forall b (b \in \val{[R_\Box]C_1} \implies b I y )$.  By the proof above, this is equivalent to $\forall b (b I x_{[R_\Box]C_1} \in \overline{\mathcal{A}} \implies b I y \in \overline{\mathcal{A}} )$. Suppose  $y \in \descr{[R_\Box]C_1}$. By the creation and basic rule we have $a_{[R_\Box]C_1} I x_{[R_\Box]C_1} \in \overline{\mathcal{A}} $. Therefore, by  $\forall b (b I x_{[R_\Box]C_1} \in \overline{\mathcal{A}} \implies b I y \in \overline{\mathcal{A}} )$, we have $a_{[R_\Box]C_1} I y \in \overline{\mathcal{A}} $. Conversely, suppose $a_{[R_\Box]C_1} I y \in \overline{\mathcal{A}} $ and $b I  x_{[R_\Box]C_1} \in \overline{\mathcal{A}} $. By the appending and basic rule,  we get $b I y \in \overline{\mathcal{A}}$.  Hence the statement is proved.

\noindent 4. The proof for  $C=\langle R_\Diamond \rangle C_1$  is similar to the previous one. 
\end{proof}

\begin{theorem}[Soundness]\label{thm:Soundness}
    The model $M =(\mathbb{F}, \cdot^\mathrm{I})$ defined above satisfies the  ABox $\mathcal{A}$. 
\end{theorem}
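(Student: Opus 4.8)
The plan is to prove that $M \models t$ for every term $t \in \mathcal{A}$, arguing by cases on the syntactic shape of $t$ and exploiting throughout that $\mathcal{A} \subseteq \overline{\mathcal{A}}$, that $\overline{\mathcal{A}}$ is clash-free (the hypothesis under which $M$ is built), and Lemma~\ref{lem:Soundness pre}, which converts membership statements about $\val{C}_M$ and $\descr{C}_M$ into the occurrence of the $I$-terms $b I x_C$ and $a_C I y$ in $\overline{\mathcal{A}}$.

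First I would dispose of the relational terms $aIx$, $aR_\Box x$, and $xR_\Diamond a$. By the very definition of $\mathbb{F}$, each such relation holds in $M$ precisely when the corresponding relational term belongs to $\overline{\mathcal{A}}$; since $t \in \mathcal{A} \subseteq \overline{\mathcal{A}}$, satisfaction is immediate. For a positive concept assertion $a:C \in \mathcal{A}$, I would note that $C$ occurs in $\mathcal{A}$, so the creation rule places $x_C::C$ in $\overline{\mathcal{A}}$, and then the basic rule $I$ applied to $a:C$ and $x_C::C$ yields $a I x_C \in \overline{\mathcal{A}}$; Lemma~\ref{lem:Soundness pre}(1) then gives $a \in \val{C}_M$, i.e.\ $M \models a:C$. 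The case $x::C \in \mathcal{A}$ is dual: the creation rule supplies $a_C:C$, the basic rule gives $a_C I x \in \overline{\mathcal{A}}$, and Lemma~\ref{lem:Soundness pre}(2) delivers $x \in \descr{C}_M$.

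The remaining cases are the negative assertions, and here clash-freeness does the work. For $\neg(a:C)$, the rule $\neg b$ forces $\neg(a I x_C) \in \overline{\mathcal{A}}$; since $\overline{\mathcal{A}}$ contains no clash, $a I x_C \notin \overline{\mathcal{A}}$, whence Lemma~\ref{lem:Soundness pre}(1) yields $a \notin \val{C}_M$, that is $M \models \neg(a:C)$. The case $\neg(x::C)$ is symmetric via the rule $\neg x$ and Lemma~\ref{lem:Soundness pre}(2). For a negated relational term, say $\neg(aIx)$, clash-freeness gives $a I x \notin \overline{\mathcal{A}}$, so by the construction of $\mathbb{F}$ the relation fails in $M$; the terms $\neg(aR_\Box x)$ and $\neg(xR_\Diamond a)$ are handled in the same way.

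I do not expect a serious obstacle here, since the substantive content has already been isolated in Lemma~\ref{lem:Soundness pre}, which makes the theorem essentially a matter of bookkeeping. The point requiring the most care is the treatment of negative assertions: one must check that the negative-assertion expansion rules translate $\neg(a:C)$ and $\neg(x::C)$ into exactly the negated $I$-terms whose positive counterparts are characterized by Lemma~\ref{lem:Soundness pre}, so that clash-freeness of $\overline{\mathcal{A}}$ can be invoked to rule out the corresponding semantic membership. Everything else is a direct application of the construction of $M$ together with that lemma.
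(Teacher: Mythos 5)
Your proposal is correct and follows essentially the same route as the paper's proof: relational terms and their negations are handled directly by the construction of $\mathbb{F}$ together with clash-freeness of $\overline{\mathcal{A}}$, and concept assertions (positive and negative) are reduced via the creation, basic, and negative-assertion rules to the occurrence or non-occurrence of $b I x_C$ and $a_C I y$ in $\overline{\mathcal{A}}$, at which point Lemma~\ref{lem:Soundness pre} finishes the argument. Your write-up is somewhat more explicit than the paper's about which expansion rules produce the relevant $I$-terms, but the decomposition and the key lemma are identical.
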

\begin{proof}
We proceed by cases.

\noindent 1. By construction, $M$ satisfies all terms of the form $b R_\Box y$, $b I y$, or $y R_\Diamond b$  in $\mathcal{A}$.

\noindent 2.   By construction, any relational term is satisfied by  $M$ iff it explicitly occurs in $\overline{\mathcal{A}}$. Thus, either $M$ satisfies  all  terms of the form $\neg (b R_\Box y)$, $\neg (b I y)$, and $\neg (y R_\Diamond b)$ occurring in $\mathcal{A}$, 
 or some expansion of  ${\mathcal{A}}$  contains a clash. 

\noindent 3. For the terms of the form $b:C$, $y::C$, $\neg (b:C)$, or $\neg (y::C)$, we have $b \in \val{C}$ iff $b I x_C \in \overline{\mathcal{A}}$, and $y \in \descr C$ iff $a_C I y \in \overline{\mathcal{A}}$ (Lemma \ref{lem:Soundness pre}).  For any $b:C$, $y::C$, $\neg (b:C)$, or $\neg (y::C)$ occurring in $\mathcal{A}$, we respectively add  $b I x_C$, $a_C I y$, $\neg (b I x_C)$, or $\neg (a_C I y) $ to $\overline{\mathcal{A}}$ via the expansion rules, and thus $M$ satisfies the constraints.
\end{proof}

The following corollary is an immediate consequence of the termination and soundness of the tableau procedure.
\begin{corollary}[Finite Model Property]
For any  consistent LE-$\mathcal{ALC}$ ABox $\mathcal{A}$, some model of $\mathcal{A}$ exists  the size of which is polynomial in $size(\mathcal{A})$.
\end{corollary}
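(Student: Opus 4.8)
The plan is to obtain the required small model as the output of the tableaux algorithm run on $\mathcal{A}$, invoking Soundness for correctness and Termination for the size bound. First I would run the algorithm on $\mathcal{A}$; by Theorem~\ref{thm:termination} it halts after polynomially many steps with a completion $\overline{\mathcal{A}}$. The key intermediate claim is that $\overline{\mathcal{A}}$ contains no clash. This is where consistency is used: since $\mathcal{A}$ has a model, and the expansion rules only record constraints that must hold in every model of $\mathcal{A}$ once the freshly introduced classifying and adjoint individuals are interpreted appropriately, no model could satisfy both $\beta$ and $\neg\beta$ for a relational term $\beta$, so a consistent input cannot expand into a clash (this is precisely the content secured by Theorem~\ref{thm:completeness}).

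Given that $\overline{\mathcal{A}}$ is clash-free, the construction of Section~\ref{ssec:Soundness} produces the interpretation $M=(\mathbb{F},\cdot^\mathrm{I})$ whose objects and features are exactly the individual names occurring in $\overline{\mathcal{A}}$ (together with the auxiliary $a_\top$ and $x_\bot$), and whose relations $I$, $R_\Box$, $R_\Diamond$ consist exactly of the relational terms explicitly present in $\overline{\mathcal{A}}$. Lemma~\ref{lem:atomic concepts} and Lemma~\ref{lem:Galois-stability} certify that $M$ is a genuine LE-$\mathcal{ALC}$ interpretation (the atomic concepts denote Galois-stable pairs and all $R_\Box$, $R_\Diamond$ are $I$-compatible), and Theorem~\ref{thm:Soundness} then yields $M \models \mathcal{A}$. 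Hence $M$ is a model of $\mathcal{A}$.

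It remains to bound the size of $M$. Its universe $A \cup X$ is contained in the set of individual names of $\overline{\mathcal{A}}$ plus two, so by Theorem~\ref{thm:termination} it has size $O((size(\mathcal{A})\cdot(\Box_{\mathcal{D}}(\mathcal{A})+\Diamond_{\mathcal{D}}(\mathcal{A})))^2\cdot(|\mathcal{R}_\Box|+|\mathcal{R}_\Diamond|))$; since $\Box_{\mathcal{D}}(\mathcal{A})$, $\Diamond_{\mathcal{D}}(\mathcal{A})$, $|\mathcal{R}_\Box|$ and $|\mathcal{R}_\Diamond|$ are all bounded by $size(\mathcal{A})$, this is polynomial in $size(\mathcal{A})$, and the relations, being subsets of the respective products of $A$ and $X$, are polynomial as well. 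Thus $M$ is a model of $\mathcal{A}$ of size polynomial in $size(\mathcal{A})$, as required. The only step that is not pure bookkeeping on top of the already-proved Termination and Soundness theorems is the clash-freeness of $\overline{\mathcal{A}}$ for consistent $\mathcal{A}$, i.e.\ that the expansion rules preserve satisfiability; this is the main point one must be careful about, and it is exactly the ingredient that makes the corollary an immediate consequence of those two theorems.
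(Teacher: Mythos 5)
Your proposal is correct and follows essentially the same route as the paper: the corollary is obtained as an immediate consequence of Termination (Theorem~\ref{thm:termination}) for the size bound and Soundness (Theorem~\ref{thm:Soundness}) for the model itself, with Completeness (Theorem~\ref{thm:completeness}) supplying the clash-freeness of $\overline{\mathcal{A}}$ for consistent input. Your write-up merely makes explicit the clash-freeness step that the paper leaves implicit, which is a reasonable elaboration rather than a different argument.
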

\begin{proof}
The model $M$ of Theorem \ref{thm:Soundness} is the required witness. The polynomial bound on the size of $M$ follows from the proof of Theorem \ref{thm:termination}.
\end{proof}

\subsection{Completeness of the tableaux algorithm}\label{ssec:Completeness}
In this section, we  prove the completeness of the tableau algorithm. The following lemma is key to this end, since it shows that  every model for an LE-$\mathcal{ALC}$ ABox  can be extended to a model with classifying object and features.

\begin{lemma}\label{lem:characteristic consistency}

For any ABox $\mathcal{A}$,  any model $M=(\mathbb{F}, \cdot^{\mathrm{I}})$ of $\mathcal{A}$ can be extended to a model $M'=(\mathbb{F}', \cdot^{\mathrm{I}'})$ such that $\mathbb{F}'=(A',X',I',\{R_\Box'\}_{\Box\in\mathcal{G}}, \{R_\Diamond'\}_{\Diamond\in\mathcal{F}})$, $A \subseteq A'$ and $X \subseteq  X'$, and moreover for every $\Box \in \mathcal{G}$ and $\Diamond \in \mathcal{F}$:

1. There exists $a_C \in A'$ and $x_C \in X'$  such that:
\begin{equation}\label{eq:completness 1}
   C^{\mathrm{I}'} =(I'^{(0)}[x_C^{\mathrm{I}'}], I'^{(1)}[a_C^{\mathrm{I}'}]), \quad a_C^{\mathrm{I}'} \in \val{C^{\mathrm{I}'}}, \quad  x_C^{\mathrm{I}'} \in \descr{C^{\mathrm{I}'}},  
\end{equation}

 2. For every individual $b$ in $A$ there exist $\Diamond b$ and $\Diamondblack b$ in $A'$ such that:
\begin{equation}\label{eq:completness 2}
I'^{(1)}[\Diamondblack b] = R_\Box'^{(1)}[b^{\mathrm{I}'}] \quad \mbox{and} \quad I'^{(1)}[\Diamond b] = R_\Diamond'^{(0)}[b^{\mathrm{I}'}],
\end{equation}

3. For every individual $y$ in $X$ there exist $\Box y$ and $\blacksquare y$ in $X'$ such that:
\begin{equation}\label{eq:completness 3}
I'^{(0)}[\blacksquare y] = R_\Diamond'^{(1)}[y^{\mathrm{I}'}] \quad \mbox{and} \quad I'^{(0)}[\Box y] = R_\Box'^{(0)}[y^{\mathrm{I}'}]. 
\end{equation}

4. For any $C$, $\val{C^\mathrm{I}}= \val{C^\mathrm{I'}} \cap A$ and $\descr{C^\mathrm{I}}= \descr{C^\mathrm{I'}} \cap X$. 
\end{lemma}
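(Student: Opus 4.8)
The plan is to realise $M'$ directly on the complex algebra $\mathbb{P}^{+}$ of $M$, by adjoining one fresh object and one fresh feature for every element of $\mathbb{P}^{+}$ and for every adjunction term, and then defining \emph{all} incidences and modal relations by the lattice order on suitable \emph{labels} in $\mathbb{P}^{+}$. Write $\gamma(a):=(a^{\uparrow\downarrow},a^{\uparrow})$ and $\mu(x):=(x^{\downarrow},x^{\downarrow\uparrow})$ for the object concept of $a\in A$ and the feature concept of $x\in X$, and recall the standard FCA fact $aIx\iff\gamma(a)\leq\mu(x)$. Since $[R_\Box]$ is completely meet-preserving and $\langle R_\Diamond\rangle$ completely join-preserving on $\mathbb{P}^{+}$ (both facts are read off the clauses defining these operators, using $I$-compatibility), the former admits a left adjoint $\Diamondblack$ and the latter a right adjoint $\blacksquare$; these are exactly the operations realised by the adjunction rules of the calculus.

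Concretely, I would set $A':=A\sqcup\{a_c\mid c\in\mathbb{P}^{+}\}\sqcup\{\Diamondblack b,\Diamond b\mid b\in A\}$ and, dually, $X':=X\sqcup\{x_c\mid c\in\mathbb{P}^{+}\}\sqcup\{\Box y,\blacksquare y\mid y\in X\}$, and assign a label $\ell(\cdot)\in\mathbb{P}^{+}$ to each element: $\ell(a):=\gamma(a)$, $\ell(x):=\mu(x)$, $\ell(a_c)=\ell(x_c):=c$, $\ell(\Diamondblack b):=\Diamondblack\gamma(b)$, $\ell(\Diamond b):=\langle R_\Diamond\rangle\gamma(b)$, $\ell(\Box y):=[R_\Box]\mu(y)$, and $\ell(\blacksquare y):=\blacksquare\mu(y)$. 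For an object-element $p$ and a feature-element $q$ I then define $p\,I'\,q$ iff $\ell(p)\leq\ell(q)$, $p\,R_\Box'\,q$ iff $\ell(p)\leq[R_\Box]\ell(q)$, and $q\,R_\Diamond'\,p$ iff $\langle R_\Diamond\rangle\ell(p)\leq\ell(q)$. Finally I put $a_C:=a_{C^{\mathrm{I}}}$, $x_C:=x_{C^{\mathrm{I}}}$ and interpret each atomic concept $D$ as the concept of $\mathbb{P}'^{+}$ carrying the label $D^{\mathrm{I}}$.

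The verification has two substantial steps. First, $I',R_\Box',R_\Diamond'$ restrict on the old domains to $I,R_\Box,R_\Diamond$: this is immediate from $aIx\iff\gamma(a)\leq\mu(x)$ and from the identities $aR_\Box x\iff\gamma(a)\leq[R_\Box]\mu(x)$ and $xR_\Diamond a\iff\langle R_\Diamond\rangle\gamma(a)\leq\mu(x)$, which one reads off the definitions of the two operators; hence $M$ is a substructure of $M'$. Second, because every element of $\mathbb{P}^{+}$ occurs as a label of some new object and of some new feature, the object-labels are join-dense and the feature-labels meet-dense, so by the basic theorem of concept lattices the order-incidence forces $\mathbb{P}'^{+}\cong\mathbb{P}^{+}$, the isomorphism sending a label $c$ to the concept $(I'^{(0)}[x_c],I'^{(1)}[a_c])$. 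Using complete meet-preservation of $[R_\Box]$ together with meet-density (and dually for $\langle R_\Diamond\rangle$), one checks that $R_\Box'$ and $R_\Diamond'$ are $I'$-compatible and that the operators they induce on $\mathbb{P}'^{+}$ correspond under this isomorphism to $[R_\Box]$ and $\langle R_\Diamond\rangle$; thus $\mathbb{F}'$ is a genuine enriched formal context whose complex algebra is modally isomorphic to that of $M$. Condition~1 then follows since $\ell(a_C)=\ell(x_C)=C^{\mathrm{I}'}$ gives $\val{C^{\mathrm{I}'}}=I'^{(0)}[x_C]$ and $\descr{C^{\mathrm{I}'}}=I'^{(1)}[a_C]$, with $a_C\in\val{C^{\mathrm{I}'}}$ and $x_C\in\descr{C^{\mathrm{I}'}}$ because $\ell(a_C)\leq C^{\mathrm{I}'}\leq\ell(x_C)$. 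Conditions~2 and~3 reduce to the two adjunctions: for instance $I'^{(1)}[\Diamondblack b]=\{q\mid\Diamondblack\gamma(b)\leq\ell(q)\}=\{q\mid\gamma(b)\leq[R_\Box]\ell(q)\}=R_\Box'^{(1)}[b]$ by $\Diamondblack\dashv[R_\Box]$, and the remaining equalities of \eqref{eq:completness 2} and \eqref{eq:completness 3} follow identically from $\Diamondblack\dashv[R_\Box]$ and $\langle R_\Diamond\rangle\dashv\blacksquare$.

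The main obstacle is exactly the second step, namely conservativity (condition~4) together with well-definedness of the enlarged frame: adjoining so many objects and features must not create spurious incidences among old elements nor change the values of the modal operators on old concepts. Defining incidence by the order on labels removes the first danger outright, while the second is controlled precisely by the complete meet-/join-preservation of $[R_\Box],\langle R_\Diamond\rangle$ and the density of the labels; verifying $I'$-compatibility of $R_\Box',R_\Diamond'$ and the operator-correspondence is where the real work lies. Granting this, condition~4 is just the statement that the isomorphism $\mathbb{P}'^{+}\cong\mathbb{P}^{+}$ is the identity on old labels: an easy induction on $C$, whose meet, join and modal steps use that these operations are preserved by the isomorphism, yields $\val{C^{\mathrm{I}'}}\cap A=\val{C^{\mathrm{I}}}$ and dually $\descr{C^{\mathrm{I}'}}\cap X=\descr{C^{\mathrm{I}}}$.
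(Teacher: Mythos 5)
Your proposal is correct and follows essentially the same route as the paper: both constructions adjoin classifying objects and features for concepts together with adjunction elements, and extend the relations by lifting through extensions and intensions (your order-on-labels clauses $\ell(p)\leq\ell(q)$, $\ell(p)\leq[R_\Box]\ell(q)$, $\langle R_\Diamond\rangle\ell(p)\leq\ell(q)$ are exactly the paper's four-case definitions of $I'$, $R_\Box'$ and $R_\Diamond'$ unfolded, and your $\ell(\Diamondblack b)=\Diamondblack\gamma(b)$ etc.\ matches the paper's $\Diamondblack b = a_{\Diamondblack(cl(b))}$). The only differences are presentational: you index the new points by all elements of $\mathbb{P}^+$ rather than only by definable concepts, and you make explicit the verification (density, $I'$-compatibility, operator correspondence, conservativity) that the paper compresses into ``Then $M'$ is as required.''
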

\begin{proof}
Fix $\Box \in \mathcal{G}$ and $\Diamond \in \mathcal{F}$. Let $M'$ be defined as follows. For every concept $C$,  we add new elements $a_C$ and $x_C$ to $A$ and $X$ (respectively) to obtain the sets $A'$ and $X'$. 
For any  $J \in \{I, R_\Box\}$,   any $a \in A'$ and $x \in X'$, we set $a J' x$ iff one of the following holds:

    \noindent 1. $a \in A$, $x \in X$, and $a J x$; 
    
    \noindent 2. $x \in X$, and $a=a_C$ for some concept $C$, and $b J x$ for all $b \in \val{C^\mathrm{I}}$;
    
    \noindent 3.   $a \in A$, and $x=x_C$ for some concept $C$, and $a J y$ for all $y \in \descr{C^\mathrm{I}}$;
    
    \noindent 4.  $a=a_{C_1}$ and $x=x_{C_2}$ for some $C_1$, $C_2$, and $b J y$ for all $b \in  \val{C_1^\mathrm{I}}$, and $ y \in  \descr{C_2^\mathrm{I}}$.
    
We set $x R_\Diamond' a$ iff 
one of the following holds:

    \noindent 1.  $a \in A$, $x \in X$, and $x R_\Diamond a$; 
    
    \noindent 2. $x \in X$, and $a=a_C$ for some concept $C$, and $x R_\Diamond b $ for all $b \in \val{C^\mathrm{I}}$;
    
    \noindent 3. $a \in A$, and $x=x_C$ for some concept $C$, and $y R_\Diamond a$ for all $y \in \descr{C^\mathrm{I}}$;
     
    \noindent 4.  $a=a_{C_1}$ and $x=x_{C_2}$ for some  $C_1$, $C_2$, and $y R_\Diamond b$ for all $b \in  \val{C_1^\mathrm{I}}$,  $ y \in  \descr{C_2^\mathrm{I}}$.

\noindent For any $b \in A$, $y \in X$, let  $\Diamondblack b =a_{\Diamondblack(cl(b))}$, $\Diamond b =a_{\Diamond(cl(b))}$,  $\blacksquare y=x_{\blacksquare (cl(y))}$, and   $\Box  y=x_{\Box (cl(y))}$, where $cl(b)$ (resp.~$cl(y)$) is the smallest concept generated by $b$ (resp.~$y$), and  the operations  $\Diamondblack$ and $\blacksquare$ are the adjoints of operations $\Box$ and $\Diamond$, respectively. For any  $C$, let $C^{\mathrm{I'}} = (I'^{(0)}[x_C], I'^{(1)}[a_C]) $. Then $M'$ is as required.
\end{proof}
\begin{theorem}[Completeness]\label{thm:completeness}
    Let $\mathcal{A}$ be a consistent ABox and $\mathcal{A}'$ be obtained via the application of any expansion rule or post-processing  applied to $\mathcal{A}$. Then $\mathcal{A}'$ is also consistent. 
\end{theorem}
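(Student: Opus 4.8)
The plan is to prove the statement for a single rule application (and for post-processing), since $\mathcal{A}'$ arises from $\mathcal{A}$ by exactly one such step; consistency of the whole expansion then follows by iterating. First I would take an arbitrary model $M=(\mathbb{F},\cdot^{\mathrm{I}})$ witnessing the consistency of $\mathcal{A}$ and immediately replace it by the extended model $M'=(\mathbb{F}',\cdot^{\mathrm{I}'})$ provided by Lemma~\ref{lem:characteristic consistency}. This is the crucial move: by item~4 of that lemma $M'$ agrees with $M$ on the original individuals, but in addition it interprets every classifying object $a_C$ and feature $x_C$, together with the adjoint individuals $\Diamond b$, $\Diamondblack b$, $\Box y$, $\blacksquare y$, so that equations \eqref{eq:completness 1}--\eqref{eq:completness 3} hold. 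Since every individual that any expansion rule can introduce (creation produces $a_C,x_C$; the adjunction rules produce $\Diamondblack b,\Box y$ or $\Diamond b,\blacksquare y$) is among these, $M'$ already supplies an interpretation for it, and it remains only to check that the terms each rule adds are satisfied in $M'$.

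I would then run a case analysis over the rules. The propositional rules ($\wedge_A$, $\vee_X$, $\wedge_A^{-1}$, $\vee_X^{-1}$) and the basic rule $I$ are immediate from the FCA semantics of $\wedge,\vee$ and from the fact that, for a formal concept $(\val{C},\descr{C})$, every object in $\val{C}$ is $I$-related to every feature in $\descr{C}$. The creation rule is discharged directly by the membership clauses $a_C^{\mathrm{I}'}\in\val{C^{\mathrm{I}'}}$ and $x_C^{\mathrm{I}'}\in\descr{C^{\mathrm{I}'}}$ of \eqref{eq:completness 1}, while the appending and negative-assertion rules follow from the identities $\val{C^{\mathrm{I}'}}=I'^{(0)}[x_C]$ and $\descr{C^{\mathrm{I}'}}=I'^{(1)}[a_C]$ in the same equation, which make ``$b I x_C$'' literally equivalent to ``$b\in\val{C}$'' and dually for $a_C I y$; the negative cases are the contrapositives. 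The modal rules $\Box$ and $\Diamond$ are validated by unfolding the definitions of $[R_\Box]$ and $\langle R_\Diamond\rangle$ on $\mathbb{F}^+$: from $b\in\val{[R_\Box]C}=R_\Box^{(0)}[\descr{C}]$ and $y\in\descr{C}$ one reads off $b R_\Box y$, and symmetrically for $\Diamond$.

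The delicate cases, where I expect the real work, are the adjunction and $I$-compatibility rules, precisely because they trade modal-relation terms for incidence terms involving the adjoint individuals; these I would discharge using \eqref{eq:completness 2} and \eqref{eq:completness 3}. For example, the identity $I'^{(1)}[\Diamondblack b]=R_\Box'^{(1)}[b]$ says that $\Diamondblack b I y$ and $b R_\Box y$ hold in $M'$ under exactly the same conditions, which simultaneously validates the $I$-compatibility rule $\Diamondblack b$ (premise $\Diamondblack b I y$, conclusion $b R_\Box y$) and the first conclusion of the adjunction rule $R_\Box$ (premise $b R_\Box y$, conclusion $\Diamondblack b I y$); the second conclusion $b I \Box y$ comes from the companion identity $I'^{(0)}[\Box y]=R_\Box'^{(0)}[y]$, and the $R_\Diamond$-side together with the rules $\Box y$, $\blacksquare y$, $\Diamond b$ are treated symmetrically. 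The one point requiring genuine care is the bookkeeping that the adjoint individual freshly created in the tableau coincides with the one built in the lemma via the smallest generated concept $cl(b)$ (resp.\ $cl(y)$), so that the interpretation of a new $\Diamondblack b$ is the intended element of $\mathbb{F}'$ rather than an unconstrained point; once this identification is fixed, every added term is satisfied. Finally, post-processing adds only relational terms forced by the isolated witnesses $x_\bot$ and $a_\top$, which $M'$ (or a trivial augmentation of it) satisfies for the same reasons, so $M'$ is a model of $\mathcal{A}'$ and $\mathcal{A}'$ is consistent.
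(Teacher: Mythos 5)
Your proposal is correct and follows essentially the same route as the paper: both invoke Lemma~\ref{lem:characteristic consistency} to replace a model of $\mathcal{A}$ by an extended model $M'$ interpreting the classifying individuals $a_C,x_C$ and the adjoint individuals $\Diamondblack b,\Diamond b,\Box y,\blacksquare y$, and then observe that every term added by an expansion rule or post-processing is satisfied in $M'$ under that interpretation. The paper leaves the rule-by-rule verification implicit, whereas you spell it out; the substance is the same.
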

\begin{proof}
If $\mathcal{A}$ is consistent, by  Lemma \ref{lem:characteristic consistency},  a model $M'$ of $\mathcal{A}$ exists which satisfies \eqref{eq:completness 1}, \eqref{eq:completness 2} and \eqref{eq:completness 3}. The statement follows from the fact that 
any term added by any expansion rule or in post-processing is satisfied by $M'$ where we interpret $a_C$, $x_C$, $\Diamondblack b$, $\Diamond b$, $\Box y$, $\blacksquare y$ as in Lemma \ref{lem:characteristic consistency}. 
\end{proof}

\begin{remark}
    The algorithm can easily be extended to acyclic TBoxes, via the unravelling technique (cf.~\cite{DLbook} for details). Notice that in the presence of TBoxes that are not completely unravelled (cf.~Subsection \ref{ssec:Description logic ALC}), polynomial-time complexity for the consistency check procedure is not necessarily preserved. 
\end{remark}

\section{Conclusion and future work}
\label{Sec: conclusions}

In this paper,  we define a two-sorted non-distributive description logic LE-$\mathcal{ALC}$ to describe and reason about formal concepts arising from  (enriched) formal contexts from FCA. 
We describe ABox and TBox terms for the logic and define a tableaux algorithm for it. This tableaux algorithm decides the consistency of ABoxes and acyclic TBoxes, and provides a procedure to construct a model when the input is consistent. We show that this algorithm is computationally more efficient than the tableaux algorithm for $\mathcal{ALC}$.

This work can be extended in several interesting directions.

\fakeparagraph{Dealing with cyclic TBoxes and  RBox axioms.} 
In this paper, we introduced a tableaux algorithm only for knowledge bases with acyclic TBoxes. We conjecture that the following statement holds of general (i.e.~possibly cyclic) TBoxes.

Developing such an algorithm is a research direction we are currently pursuing. Another aspect we intend to develop   in future work concerns giving a complete axiomatization for LE-$\mathcal{ALC}$. RBox axioms are used in  description logics to describe the relationship between different relations in  knowledge bases and  the properties of these relations such as reflexivity, symmetry, and transitivity.  It would be interesting to see if it is possible to obtain necessary and/or sufficient conditions on the shape of RBox axioms for which a tableaux algorithm can be obtained. This has an interesting relationship with the problem in LE-logic of providing computationally efficient proof systems for various extensions of LE-logic in a modular manner \cite{greco2016unified,ICLArough}. 

\fakeparagraph{Generalizing to  other semantic frameworks.} The  non-distributive DL introduced in this paper is  semantically motivated by a relational  semantics for LE-logics which establishes a link with FCA. 
A different semantics for the same logic, referred to as  graph-based semantics \cite{conradie2020non}, provides another interpretation of the same logic as  a logic suitable for evidential and hyper-constructivist reasoning. In the future, we intend to develop description logics for reasoning in the framework of graph-based semantics, to appropriately model evidential and hyper-constructivist settings. 

\fakeparagraph{Generalizing to more expressive description logics.} The  DL LE-$\mathcal{ALC}$ is the non-distributive counterpart  of  $\mathcal{ALC}$. A natural direction for further research is to explore the non-distributive counterparts of extensions of $\mathcal{ALC}$ such as  $\mathcal{ALCI}$ and $\mathcal{ALCIN}$.

 \fakeparagraph{Description logic and Formal Concept Analysis.} 
 The relationship between FCA and DL has been studied and used in several applications \cite{DLandFCA1,DLandFCA2,DLandFCA3}. The framework of LE-$\mathcal{ALC}$ formally brings  FCA and DL together, both because its concepts   are naturally interpreted as formal concepts in FCA, and because its language   is designed  to represent knowledge and reasoning in   enriched formal contexts. 
 Thus, these results pave the way to the possibility of establishing a closer and more formally explicit  connection between FCA and DL, and of using this connection in  theory and applications.

\bibliographystyle{splncs04}
\bibliography{ref}

\begin{thebibliography}{10}
\providecommand{\url}[1]{\texttt{#1}}
\providecommand{\urlprefix}{URL }
\providecommand{\doi}[1]{https://doi.org/#1}

\bibitem{DLandFCA1}
Atif, J., Hudelot, C., Bloch, I.: Explanatory reasoning for image understanding
  using formal concept analysis and description logics. IEEE Transactions on
  Systems, Man, and Cybernetics: Systems  \textbf{44}(5),  552--570 (2014)

\bibitem{DLhandbook}
Baader, F., Calvanese, D., McGuinness, D., Nardi, D., Patel-Schneider, P.: The
  Description Logic Handbook: Theory, Implementation and Applications.
  Cambridge University Press (2003)

\bibitem{DLbook}
Baader, F., Horrocks, I., Lutz, C., Sattler, U.: An Introduction to Description
  Logic. Cambridge University Press (2017)

\bibitem{DLandFCA2}
Baader, F., Sertkaya, B.: Applying formal concept analysis to description
  logics. Concept Lattices pp. 261--286 (2004)

\bibitem{ICLArough}
van~der Berg, I., De~Domenico, A., Greco, G., Manoorkar, K.B., Palmigiano, A.,
  Panettiere, M.: Labelled calculi for the logics of rough concepts. Logic and
  Its Applications pp. 172--188 (2023)

\bibitem{fuzzyDL1}
Borgwardt, S., Pe{\~{n}}aloza, R.: Fuzzy description logics -- a survey.
  Scalable Uncertainty Management pp. 31--45 (2017)

\bibitem{conradie2022modal}
Conradie, W., De~Domenico, A., Manoorkar, K., Palmigiano, A., Panettiere, M.,
  Prieto, D.P., Tzimoulis, A.: Modal reduction principles across relational
  semantics. arXiv preprint arXiv:2202.00899  (2022)

\bibitem{conradie2021rough}
Conradie, W., Frittella, S., Manoorkar, K., Nazari, S., Palmigiano, A.,
  Tzimoulis, A., Wijnberg, N.M.: Rough concepts. Information Sciences
  \textbf{561},  371--413 (2021)

\bibitem{conradie2017toward}
Conradie, W., Frittella, S., Palmigiano, A., Piazzai, M., Tzimoulis, A.,
  Wijnberg, N.M.: Toward an epistemic-logical theory of categorization.
  Electronic Proceedings in Theoretical Computer Science, EPTCS  \textbf{251}
  (2017)

\bibitem{conradie2016categories}
Conradie, W., Frittella, S., Palmigiano, A., Piazzai, M., Tzimoulis, A.,
  Wijnberg, N.M.: Categories: how {I} learned to stop worrying and love two
  sorts. In: International Workshop on Logic, Language, Information, and
  Computation. pp. 145--164. Springer (2016)

\bibitem{conradie2019algorithmic}
Conradie, W., Palmigiano, A.: Algorithmic correspondence and canonicity for
  non-distributive logics. Annals of Pure and Applied Logic  \textbf{170}(9),
  923--974 (2019)

\bibitem{conradie2020non}
Conradie, W., Palmigiano, A., Robinson, C., Wijnberg, N.: Non-distributive
  logics: from semantics to meaning. In: Rezus, A. (ed.) Contemporary Logic and
  Computing, Landscapes in Logic, vol.~1, pp. 38--86. College Publications
  (2020)

\bibitem{ganter2012formal}
Ganter, B., Wille, R.: Formal concept analysis: mathematical foundations.
  Springer Science \& Business Media (2012)

\bibitem{giordano2015semantic}
Giordano, L., Gliozzi, V., Olivetti, N., Pozzato, G.L.: Semantic
  characterization of rational closure: From propositional logic to description
  logics. Artificial Intelligence  \textbf{226},  1--33 (2015)

\bibitem{giordano2022conditional}
Giordano, L., Gliozzi, V., Theseider~Dupr{\'e}, D.: A conditional, a fuzzy and
  a probabilistic interpretation of self-organizing maps. Journal of Logic and
  Computation  \textbf{32}(2),  178--205 (2022)

\bibitem{greco2016unified}
Greco, G., Ma, M., Palmigiano, A., Tzimoulis, A., Zhao, Z.: Unified
  correspondence as a proof-theoretic tool. Journal of Logic and Computation
  \textbf{28}(7),  1367–1442 (2016)

\bibitem{DLandFCA3}
Jiang, Y.: Semantifying formal concept analysis using description logics.
  Knowledge-Based Systems  \textbf{186} (2019)

\bibitem{non-monotone}
Lieto, A., Pozzato, G.L.: A description logic framework for commonsense
  conceptual combination integrating typicality, probabilities and cognitive
  heuristics. Journal of Experimental \& Theoretical Artificial Intelligence
  \textbf{32}(5),  769--804 (2020)

\bibitem{fuzzyDL2}
Ma, Z.M., Zhang, F., Wang, H., Yan, L.: An overview of fuzzy description logics
  for the semantic web. The Knowledge Engineering Review  \textbf{28}(1),
  1–34 (2013)

\bibitem{depaiva2011constructive}
de~Paiva, V., Haeusler, E.H., Rademaker, A.: Constructive description logics
  hybrid-style. Electronic Notes in Theoretical Computer Science  \textbf{273},
   21--31 (2011)

\bibitem{shilov2007proposal}
Shilov, N.V., Han, S.Y.: A proposal of description logic on concept lattices.
  In: Proceedings of the Fifth International Conference on Concept Lattices and
  their Applications. pp. 165--176 (2007)

\bibitem{wurm2017language}
Wurm, C.: Language-theoretic and finite relation models for the (full) lambek
  calculus. Journal of Logic, Language and Information  \textbf{26}(2),
  179--214 (2017)

\end{thebibliography}

\end{document}